%latex file for second revision of "Convergence of truncations for compact quantum groups ..."
%May 8, 2023
%by Marc A, Rieffel
%
\documentclass{amsart}[14pt]
\usepackage{amssymb, amsfonts}
\usepackage{hyperref}

\newtheorem{theorem}{Theorem}[section]
\newtheorem{lemma}[theorem]{Lemma}
\newtheorem{proposition}[theorem]{Proposition}
\newtheorem{corollary}[theorem]{Corollary}

\theoremstyle{definition}
\newtheorem{definition}[theorem]{Definition}
\newtheorem{example}[theorem]{Example}

\newtheorem{question}[theorem]{Question}

\theoremstyle{remark}

%\newcommand{\Der}{\mathop{\mathrm{Der}}\nolimits}

%\newcommand{\Hom}{\mathop{\mathrm{Hom}}\nolimits}

%mathcal letters

\newcommand{\cA}{{\mathcal A}}
\newcommand{\cB}{{\mathcal B}}
\newcommand{\cC}{{\mathcal C}}
\newcommand{\cD}{{\mathcal D}}

\newcommand{\cH}{{\mathcal H}}

\newcommand{\cK}{{\mathcal K}}

\newcommand{\cM}{{\mathcal M}}

\newcommand{\cS}{{\mathcal S}}

%bold letters

\newcommand{\bC}{{\mathbb C}}
\newcommand{\bN}{{\mathbb N}}
\newcommand{\bR}{{\mathbb R}}

\newcommand{\bZ}{{\mathbb Z}}

%mathfrak letters

%greek letters

\renewcommand{\a}{\alpha}
\renewcommand{\b}{\beta}
\renewcommand{\d}{\delta}
\newcommand{\D}{\Delta}
\newcommand{\e}{\epsilon}
\newcommand{\g}{\gamma}
\newcommand{\G}{\Gamma}

\newcommand{\om}{\omega}

\newcommand{\s}{\sigma}

%frac letters

%\newcommand{\cpi}{{\pi^\bC}}

%\newcommand{\ccH}{{\mathcal H^\bC}}

\newcommand{\<}{\langle}
\renewcommand{\>}{\rangle}

%\newcommand{\Sp}{\mathrm{Spin}}
%\newcommand{\sp}{\mathrm{spin}}

%mathsf letters

\newcommand{\hr}{{\mathsf{h}}}

\newcommand{\ep}{{\varepsilon}}

\newcommand{\af}{\cA_c}
\newcommand{\hd}{\widehat \D}

\newcommand{\Lat}{{Latr\'emoli\`ere}}

\numberwithin{equation}{section}

\begin{document}

\title[Convergence of Fourier truncations ]
{Convergence of Fourier truncations  \\   for compact quantum   
groups  \\
and finitely generated groups}

% Remove or comment out any unused author tags.
% author one information
\author{Marc A. Rieffel}
\address{Department of Mathematics\\
University of California\\
Berkeley, CA\ \ 94720-3840}
%\curraddr{}
\email{rieffel@math.berkeley.edu}
%\thanks{The research reported here was supported in part by}
%\dedicatory{In celebration of the }
   
%\subjclassname}{
\subjclass[2010]{
Primary 58B32; Secondary 46L87,  58B34}
% Use this one if  are using an older version of amsproc.
%\subjclass{}

\keywords{truncation, compact quantum group, finitely
generated group, quantum metric , 
quantum Gromov-Hausdorff distance, operator system}
%\date{}

\large{

\begin{abstract}
We generalize the Fej\'er-Riesz operator systems defined for the circle group by Connes and van Suijlekom to the setting of compact matrix quantum groups and their ergodic actions on C*-algebras. 
These truncations form filtrations of the containing C*-algebra. We show that when they and the containing C*-algebra are equipped with suitable quantum metrics, then under suitable conditions they converge to the
containing C*-algebra for quantum Gromov-Hausdorff distance.
Among other examples, our results are applicable to the
quantum groups $SU_q(2)$ and their homogeneous
spaces $S^2_q$.
\end{abstract}

\maketitle

\tableofcontents

%%%%%%%%%%%%%%%%%%%

\section{Introduction}

Over the past decade or so there has been slowly increasing 
interest in the use of operator systems in non-commutative 
geometry 
\cite{Krr, KrL, Sai, GlS20, GlS, DnLL, CvS2, AgKK, AgKK2, ciF, KKd, dng, LvS, DRH, vNl22}, 
spurred on quite recently by the paper \cite{CnvS}
of Connes and van Suijlekom
concerning spectral truncations. Even more recently, 
the use of quantum Gromov-Hausdorff distance
for spectral truncations in the setting of operator systems was 
initiated in \cite{vSj2}. (But see the earlier paper \cite{DnLM}
which used the setting of order-unit spaces.)
These two papers
of Connes and van Suijlekom concentrate 
on revealing the remarkably rich structure that one obtains 
already in the case of the circle. (See also \cite{Hkk, LvS}.)

For a spectral triple $(\cA,\cH, D)$ the spectral truncations
are the compressions $ P\cA P $, where $ P $ is a 
finite-dimensional spectral projection of $D $, viewed as
an operator system of operators on $ P\cH $. In \cite{vSj2}
it was shown that, for the case in which $\cA $
is the algebra $C (S^1) $ of continuous functions on the circle,
the spectral truncations converge to  $C (S^1) $ for quantum
Gromov-Hausdorff distance. (Very recently the corresponding question
for higher-dimensional tori has been explored in \cite{LvS}.) 
It is extremely interesting to ask
to what extent this continues to be true for other spectral triples.

In \cite{CnvS, vSj2} the operator systems 
dual to the spectral truncations
play an important role. While for the circle the spectral truncations 
are shown to be the vector spaces of Toeplitz matrices of various 
sizes, their operator system duals are 
shown to be the vector spaces
of functions whose Fourier coefficients 
are 0 outside given intervals
of $\bZ $ symmetric about 0. In \cite{vSj2} it is shown that these
dual operator systems too converge to $C (S^1) $ for a suitable 
quantum Gromov-Hausdorff distance. It is interesting to ask
to what extent this continues to be true in other situations.

The purpose of the present paper is to give an answer to this latter
question in a quite broad setting, namely that of compact matrix 
quantum groups, which includes the case 
of finitely generated discrete groups. Our setting 
also includes ergodic actions of compact
matrix quantum groups on unital C*-algebras. It seems to me
that the feature underlying the proof in \cite{vSj2} 
of the convergence
of the dual operator systems is the fact 
that $S^1 $ is a compact group.

We now give an imprecise statement of our main theorem. 
The various technical terms will be defined as needed in the 
text of this paper.

\begin{theorem} (Imprecise statement of Theorem \ref{main})
Let $ (\cA, \D) $ be a compact matrix quantum group, and let 
$\a $ be an ergodic action of $ (\cA, \D) $ on a unital C*-algebra $\cB $.
Let $\cA_1$ be the operator subsystem of $\cA $ generated by
the coordinate elements of a fixed faithful finite dimensional 
corepresentation of $\cA $, and for each $n \in \bZ $ let $\cA_n $
be the operator subsystem of $\cA $ generated by all products of
$n $ elements of $\cA_1$, so that the $\cA_n $'s form a filtration
of $\cA $. Let $\{\cB_n\}$ be the corresponding filtration of $\cB $.
Assume that $ (\cA, \D) $ is coamenable, and let $L^\cB$
be a regular $\a$-invariant Lip-norm on $ \cB $. For each $n $
let $L^\cB_n$ be the restriction of $ L^\cB $ to $\cB_n $.
Then the compact quantum metric spaces $ (\cB_n, L^\cB_n)$
converge to $ (\cB, L^\cB)$ for quantum Gromov-Hausdorff
distance.
\end{theorem}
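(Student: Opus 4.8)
The plan is to realize each truncation as a Fejér-type ``slicing'' of $\cB$ coming from the action $\a$, and then to bound the quantum Gromov--Hausdorff distance by showing that these slicing maps are Lip-norm nonincreasing and converge to the identity at a rate controlled by $L^\cB$. Concretely, I would produce for each $n$ a unital completely positive map $\Phi_n \colon \cB \to \cB_n$ with three properties: (i) $\Phi_n$ lands in the truncation $\cB_n$; (ii) $\Phi_n$ is nonincreasing for the Lip-norm, $L^\cB(\Phi_n(b)) \le L^\cB(b)$; and (iii) $\Phi_n$ approximates the identity at a rate governed by $L^\cB$, i.e.\ there are constants $\d_n \to 0$ with $\|b - \Phi_n(b)\| \le \d_n\, L^\cB(b)$ for all $b \in \cB$. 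Granting such maps, the inclusion $\cB_n \hookrightarrow \cB$ — which is Lip-isometric, since $L^\cB_n$ is by definition the restriction of $L^\cB$ — together with $\Phi_n$ furnishes a bridge between $(\cB_n, L^\cB_n)$ and $(\cB, L^\cB)$ whose associated Hausdorff distance of state spaces is bounded by a constant multiple of $\d_n$. Letting $n \to \infty$ then yields the asserted convergence, so everything reduces to constructing the $\Phi_n$ and verifying (i)--(iii).

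To build $\Phi_n$ I would use the Peter--Weyl decomposition attached to the ergodic action. The regular part of $\cB$ decomposes as an algebraic direct sum of finite-dimensional spectral subspaces indexed by the irreducible corepresentations of $(\cA, \D)$, and by construction $\cB_n$ is the span of the components of degree at most $n$. Choose Fejér (Cesàro) functionals $\var_n$ on $\cA$: positive functionals supported on the corepresentations of degree $\le n$, with Cesàro weights tending to $1$ on each fixed component and tending to the counit $\e$ overall. Setting $\Phi_n(b) = (\mathrm{id} \otimes \var_n)(\a(b))$, the low-degree support of $\var_n$ forces $\Phi_n$ to map $\cB$ into $\cB_n$, giving (i). This is exactly where coamenability enters: it guarantees that the Fejér functionals descend to genuine states on the C*-algebra $\cA$, so that $\Phi_n$ is a well-defined unital completely positive map on all of $\cB$ rather than merely on the dense regular subalgebra. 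Property (ii) is then immediate from $\a$-invariance of $L^\cB$: averaging the coaction against a state cannot increase the Lip-norm, which is precisely the defining inequality for an $\a$-invariant Lip-norm applied to $\var_n$.

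For (iii) I would argue by total boundedness. Since $L^\cB$ is a Lip-norm, the image of its unit Lip-ball in the quotient $\cB/\bC 1$ is totally bounded. Each $\Phi_n$ is unital, so $\Phi_n - \mathrm{id}$ annihilates the scalars and descends to this quotient, where it is uniformly Lipschitz with constant at most $2$ (as $\|(\Phi_n-\mathrm{id})(b)\| \le 2\,\|b - c\,1\|$ for every scalar $c$). On a fixed spectral component $\Phi_n$ acts as multiplication by a Cesàro weight tending to $1$, so $\Phi_n \to \mathrm{id}$ pointwise on the dense regular subalgebra and hence everywhere. Pointwise convergence of a uniformly Lipschitz family on a totally bounded set is automatically uniform, so $\d_n := \sup\{\|b - \Phi_n(b)\| : L^\cB(b) \le 1\}$ tends to $0$, and homogeneity in $L^\cB$ promotes this to the bound in (iii). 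Regularity of $L^\cB$ is what I would invoke to ensure that the restrictions $L^\cB_n$ are again Lip-norms and that the resulting bridge is admissible.

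The hard part will be the interplay between coamenability and positivity in the construction of the $\Phi_n$. Without coamenability the Fejér functionals need not even be bounded on the reduced C*-algebra, so neither the complete positivity of $\Phi_n$ nor — through $\a$-invariance — the Lip-norm bound (ii) would be available; securing an honest sequence of positive, finitely supported states $\var_n \to \e$ adapted to the degree filtration is the crux of the argument. A secondary, more routine point is to check that the bridge assembled from $\Phi_n$ and the inclusion meets the marginal and continuity conditions of the chosen formulation of quantum Gromov--Hausdorff distance and delivers a Hausdorff-distance bound of the claimed order $\d_n$; this should follow mechanically once (i)--(iii) are in hand.
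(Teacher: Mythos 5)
Your overall architecture mirrors the paper's own proof: the paper also builds a map of the form $P_\nu(b) = (I^\cB \otimes \nu)\a(b)$ with $\nu$ a state of finite Fourier support, shows it is Lip-nonincreasing by $\a$-invariance and satisfies $\|b - P_\nu(b)\| \le \e L^\cB(b)$, and then invokes proposition 8.5 of \cite{R6} (your ``bridge'' step). The genuine gap is at the point you yourself call the crux: the existence of the ``Fej\'er (Ces\`aro) functionals'' $\var_n$ is asserted, never constructed, and this existence is the actual mathematical content of the theorem. Coamenability gives you that the counit $\ep$ is itself a continuous state on $\cA$; it does \emph{not} hand you positive functionals with prescribed finite Fourier support that approximate $\ep$ --- ``the Fej\'er functionals descend to genuine states'' is not an argument. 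Worse, the specific form you propose, with $\var_n$ acting as a scalar ``Ces\`aro weight'' on each isotypic component, forces $\var_n$ to be \emph{central} (its Fourier coefficient at each $\g \in \hd$ a multiple of the identity matrix). Central finitely supported states converging to the counit exist for the circle and for compact groups (conjugation-invariant Fej\'er kernels), but for a general coamenable compact quantum group their existence is a strictly stronger demand than coamenability and is not available; nothing in your proposal supplies them.

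The paper fills this gap by a construction that avoids both positivity-by-fiat and centrality. Proposition \ref{provs} shows that for $a \in \cA_c$ with $\|a\|_2 = 1$, the vector state $\mu_a(b) = \hr(a^* b a)$ in the GNS representation of the (faithful) Haar state \emph{automatically} vanishes off finitely many isotypic components; this is not formal, since $\hr$ need not be a trace --- it uses Van Daele's weak KMS automorphism $\s$ of $\cA_c$. Proposition \ref{dense} then uses total boundedness of the Lip-ball (theorem 4.5 of \cite{R6}) to approximate \emph{any} state --- in particular $\ep$ --- by finite convex combinations of such vector states, uniformly on $\{a : L^\cA(a) \le 1\}$, yielding Proposition \ref{profin}: a single state $\nu$ of finite Fourier support with $|\ep(a) - \nu(a)| \le \e L^\cA(a)$. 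Note that no scalar action on components is needed anywhere: the range containment $P_\nu(\cB) \subseteq \cB^F$ only requires $\nu(\cA^\g) = 0$ for $\g \notin F$. Two further remarks on the salvageable parts of your argument. Your Arzel\`a--Ascoli step on $\cB$ is correct as far as it goes, and (granted the states) it would even handle an arbitrary $\a$-invariant regular Lip-norm, whereas the paper's estimate in Proposition \ref{proav} instead exploits the induced form \eqref{eqind} of $L^\cB$ through the identity $\phi(\mu * c) = \mu(c * \phi)$; this is a genuine, if minor, difference of route. But your claimed pointwise convergence $\Phi_n \to \mathrm{id}$ on the regular subalgebra silently uses $(I^\cB \otimes \ep)\circ \a = I^\cB$, which is exactly Lemma \ref{lemcid} and itself requires the coamenability argument from \cite{SkZ}; it is not automatic from ``weights tending to $1$.''
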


We will call the $\cB_n $'s ``Fourier truncations'' of
 $\cB $, for reasons that will be evident later.
They correspond to the Fej\'er-Riesz operator systems
of \cite{CnvS, vSj2}. So we are dealing with
``truncations of the Fourier modes of the (bosonic) elements'' 
as mentioned very briefly in section 6 of \cite{CnvS}.

Of course there remains the big challenge of 
formulating and proving corresponding results for 
the spectral truncations of \cite{CnvS, vSj2}.

In Section \ref{secex} we provide a number of classes of
examples to which our results are
applicable. These include
ordinary compact Lie groups and their ergodic
actions on unital C*-algebras, the group C*-algebras of 
amenable finitely generated discrete groups, groups and
quantum groups of rapid decay, 
and the quantum groups
$SU_q(2)$ and their homogeneous
space $S^2_q$ \cite{AgK, AgKK, KKd}.

The proof of our main theorem makes crucial use of 
parts of Hanfeng Li's exploration of metric aspects of
ergodic actions of compact quantum groups in 
section 8 of \cite{Lih5}.

I thank David Kyed and Hanfeng Li for useful comments
on the first version of this paper.

%%%%%%%%%%%%%%%%%%%%%%%%%%%%%%

\section{Preliminaries about compact quantum groups}
\label{secprim}

In this section we gather well-known facts about compact quantum 
groups that we will need. We give few proofs, 
and instead refer the reader to \cite{Wr98, DCm, vD98, DijK}
and the references they include.

A compact quantum group is a pair $(\cA, \D)$, where $\cA$ is a 
unital C*-algebra and $\D: \cA \to  \cA \otimes \cA$ is a 
unital $*$-homomorphism, called the comultiplication, such that
\begin{enumerate}
\item $(\D \otimes I^\cA)\D = (I^\cA \otimes \D)\D$ as 
homomorphisms $\cA \to \cA \otimes \cA \otimes \cA$ (coassociativity) 
\item The spaces $(\cA \otimes 1_\cA)\D(\cA) := 
\mathrm{span}\{(a \otimes 1_\cA)\D(b) : a, b \in \cA\}$ 
and $(1_\cA \otimes \cA)\D(\cA)$ are
dense in $\cA \otimes \cA$ (the cancellation property).
\end{enumerate}
\noindent
Here $I^\cA$ is the identity operator on $\cA$, and the tensor 
product is the minimal tensor product for C*-algebras.

A fundamental fact about compact quantum groups is that one 
can prove that they have a generalization of the normalized 
Haar measure on ordinary compact groups. That is, there is 
a unique state, the ``Haar state'' $\hr :\cA \to \bC$ that 
satisfies the left and right invariance conditions
\[
(I^\cA \otimes \hr) \circ \D = \hr 1_\cA   = (\hr \otimes I^\cA) \circ \D    .
\]
In general the Haar state may not be a trace, and it may not be 
faithful on $\cA$. 

The following class of examples is of importance for this paper.

\begin{example}
\label{dg}
Let $\G$ be a discrete group. Denote by $\cC^*_u(\G)$ the 
full group C*-algebra of $\G$. Because $\G$ is discrete, 
this is a unital C*-algebra. Every $x \in \G$ determines
a unitary element, $\d_x$, of $\cC^*_u(\G)$, and the linear span, 
$C_f(\G)$, of these 
unitary elements forms a dense $*$-subalgebra of $\cC^*_u(\G)$. 
This subalgebra
can be viewed as consisting of the finitely supported functions on $\G$.
The mapping $\d_x \mapsto \d_x \otimes \d_x$ determines a unital homomorphism, 
$\D$, from $\cC^*_u(\G)$  into $\cC^*_u(\G) \otimes \cC^*_u(\G)$
that makes $\cC^*_u(\G)$ into a compact quantum group. 
(Note that in this situation one might expect to use 
the maximal C*-tensor product, but for quantum groups
it is usual to use the minimal C*-tensor product, 
for which the construction of this example remains valid.) 
For this class of examples the Haar state is just the usual tracial state 
coming from evaluating functions in $C_f(\G)$
at the identity element of $\G$. 
(For the details of this construction see \cite{BdMT}; for even  locally compact $\G$
see the earlier \cite{Ior}.) In the case that $\G$ is not amenable, 
the Haar state on $\cC^*_u(G)$  is not faithful, 
almost by one definition of amenablilty.
(But the Haar state is always faithful on $C_f(\G)$.)
The reduced C*-algebra of $\G$, 
defined by the GNS
construction for the Haar state,
is a compact quantum group in essentially the same way.
If $\G=F_2 $, the free group on two generators, then the 
Haar state is faithful on its reduced C*-algebra, but $F_2$
is not amenable.
\end{example}

For a compact quantum group $(\cA, \D)$ the 
C*-algebra $\cA$ gives the ``space'',
and has its own $*$-representations on Hilbert spaces. The generalization 
 of  finite-dimensional unitary representations of groups 
 to the quantum group context is
called  finite-dimensional unitary  ``corepresentations''. 
Let $\cH$ be a finite dimensional Hilbert space 
(with inner product linear in the second variable). 
Consider the right Hilbert $\cA$-module $\cH \otimes \cA$, with 
$\cA $-valued inner product
$\<\xi \otimes a,\eta \otimes b\>_\cA =\<\xi, \eta\>a^*b$ for $a,b\in \cA$ and
$\xi,\eta \in \cH$.
A unitary corepresentation of $(\cA, \D)$ on $\cH$ is a linear map
$u: \cH \to \cH \otimes \cA$ that satisfies
\begin{enumerate}
\item $(u \otimes I^\cA)\circ u = (I^\cH \otimes \D) \circ u$
\item $\<u(\xi),u(\eta)\>_\cA = \<\xi ,\eta\>\bC 1_\cA$ for $\xi, \eta \in \cH$
\item $u(\cH)\cA = \cH \otimes \cA$.
\end{enumerate}
(See \cite{DCm}, section 3 of \cite{Pn07} and definition 2.8 of \cite{Sai}.)
Given $\xi, \eta \in \cH$, we obtain an 
element, $u_{\xi\eta}$ of $\cA$, by
\[
u_{\xi\eta} = (\<\xi, \cdot|  \otimes I^\cA)(u(\eta)) 
\]
(where $\<\xi, \cdot| $ denotes the linear functional on $\cH$
determined by $\xi$).
These elements are called ``coordinate elements" of the corepresentation
$u $. If $\xi_1, \cdots, \xi_n $ is an orthonormal basis for $\cH $, and if we set
$u_{jk} = u_{\xi_j\xi_k} $, then the matrix $U = \{u_{jk}\} $ is a unitary
element of $M_n(\cA) $ that satisfies
\begin{equation}
\label{unmat}
\D(u_{jk}) = \sum ^n_{\ell =1} u_{j\ell} \otimes u_{\ell k} . 
\end{equation}
Such a matrix provides the more common definition 
of a finite-dimensional unitary corepresentation of a 
compact quantum group. Given such a unitary matrix $ U $ that satisfies this
relation, one can define the corresponding linear map 
$u: \cH \to \cH \otimes \cA$ by $u (\xi) = U(\xi \otimes 1_\cA)$, 
suitably interpreted.
See lemma 1.7 of \cite{DCm}.

We will let $\hd $ denote the set of unitary equivalence classes of
(finite-dimensional) irreducible 
 unitary corepresentations of $ (\cA, \D) $. It can be considered
 to be the set of Fourier modes in a generalized sense.
 In the usual 
 way, our notation will not distinguish between such 
corepresentations
and their equivalence classes. For each $\g\in\hd $ we let 
$\cA^\g $ denote the corresponding isotypic component of $\cA $.
It is a finite-dimensional subspace of $\cA $, and 
$\D (\cA^\g)\subseteq \cA^\g \otimes \cA^\g $. Each unitary 
corepresentation has a conjugate unitary corepresentation. 
We denote the conjugate of $\g $ by $\g^c $.
Then $ (\cA^\g) ^* =\cA^ {\g^c} $. The isotypic 
components are mutually orthogonal with respect 
to the inner product defined by the Haar state.

Let $\af$ be the linear span of all the coordinate 
elements of all finite dimensional 
corepresentations of $(\cA, \D)$, or equivalently, 
of all the matrix elements 
$u_{jk} $ for representatives $U$ of all the unitary equivalence 
classes of irreducible corepresentations. It is the
algebraic direct sum of the  $\cA^\g $'s, and
it is a dense subspace of $\cA$. Because one can 
form tensor products of corepresentations, 
$\af $ is a unital subalgebra 
of $\cA $. Because one can form conjugates 
of corepresentations, $\af $ is a    
$* $-subalgebra of $\cA $. Furthermore,  the one-dimensional 
trivial corepresentation is well-defined on $\af $ and serves as a coidentity 
element for $\af $, and there is a well-defined 
coinverse (i.e. antipode)
on $\af $. Thus $\af $ is a unital Hopf $*$-algebra \cite{DCm}. 
In addition, 
the Haar state on $\cA $ is faithful on $\af $. 

Let $\cA_u$ be the completion  of $\af $ for the universal C*-norm on $\af $. 
All of the unital Hopf $* $-algebra structure of $\af $ extends 
to $\cA_u$ 
except that the coinverse may not be continuous for the universal 
C*-norm on $\af $. If it is continuous, then $\af $ is said to be of
``Kac type''. The Haar state need not be faithful on $\cA_u $, 
as seen in Example \ref{dg}.

Instead, let $\cA_r$ be the completion of 
$\af $ for the C*-norm from the
GNS representation on the Hilbert space $ L^2 (\cA_c , \hr) $
obtained from using the Haar 
state. All of the unital Hopf $* $-algebra structure of $\af $ 
extends to $\cA_r$ 
except that the coinverse and the coidentity 
may not be continuous for the reduced
norm. We call $\cA_r$ the ``reduced'' compact quantum group 
for $\af $.  The Haar state is faithful on $\cA_r$.     
 The coidentity is continuous for the reduced norm 
 if and only if $\cA_u$ and
 $\cA_r$ coincide, in which case $\cA $ is said to be
``coamenable''.
For a discrete group $ \G $ the corresponding 
compact quantum 
group is coamenable exactly if $ \G $ is amenable.
In this paper, we will be concerned mostly
with compact quantum groups that are coamenable.

Let $\cA'$ be the Banach space of all continuous linear 
functionals on $\cA $. For $ \mu, \nu \in \cA'$ set
$\mu * \nu = (\mu \otimes \nu)\circ \D$. This is an associative 
product, ``convolution'', on $\cA' $ for 
which $\cA' $ is a Banach algebra.
The Haar state $\hr $ is an element of $\cA'$, and
$\hr * \mu = \mu(1_\cA)\hr = \mu * \hr$ for all $\mu \in \cA'$. 
If the coidentity element is continuous on $\cA$ so that it is
an element of $\cA'$, then it serves as an identity element
of the algebra $\cA'$, and we will then denote it by $\ep $.

For any $\b, \g \in \hd$ let $\b \otimes \g$ denote the subset 
of $\hd$ consisting of all the elements of $\hd$ that appear in the
decomposition of the tensor product of $\b$ with $\g$ into
irreducible representations. For any finite subset $S$ of $\hd$
let $\cA^S$ denote the direct sum of the $\cA^\g$'s with
$\g \in S$. Then for any $\b, \g \in \hd $ one has 
$\cA^\b \cA^\g \subseteq \cA^{\b \otimes \g}$ (where
$\cA^\b \cA^\g$ means ``finite sums of products'').

The following result will be crucial for our purposes.

\begin{proposition}
\label{provs}
Let $a\in\cA_c $, viewed as an element of  $ L^2 (\cA_c , \hr) $, 
and satisfying $\|a\|_2 = 1 $ for the norm on $ L^2 (\cA_c , \hr) $.
Let $\mu_a $ be the vector state on $\cA_r $ determined by
$a $. Then there is a finite subset, $ F_a $, of $\hd $
such that if $\g\in\hd $ 
but $\g $ is \emph{not} in $ F_a $, 
then $\mu_a (\cA^\g) = 0 $.
\end{proposition}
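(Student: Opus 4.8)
The plan is to exploit the isotypic decomposition of $a$ together with the fusion rule $\cA^\b\cA^\g\subseteq\cA^{\b\otimes\g}$ recorded above. First I would record the finite isotypic support of $a$: since $a\in\af$ lies in the algebraic direct sum of the $\cA^\g$'s, there is a finite set $S\subseteq\hd$ and nonzero elements $a_\b\in\cA^\b$ ($\b\in S$) with $a=\sum_{\b\in S}a_\b$. Next I would unwind the vector state. In the GNS representation for $\hr$ the algebra acts by left multiplication, so for $x\in\cA_r$,
\[
\mu_a(x)=\<a,xa\>=\hr(a^*xa),
\]
where $\<\cdot,\cdot\>$ is the $L^2(\af,\hr)$ inner product. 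The key elementary fact is that $\hr$ annihilates every nontrivial isotypic component: writing $\g_0$ for the trivial corepresentation, one has $\cA^{\g_0}=\bC 1_\cA$ and $\hr(y)=\<1_\cA,y\>$, so the mutual orthogonality of the isotypic components for the Haar inner product gives $\hr(\cA^\g)=0$ for every $\g\neq\g_0$. Thus $\mu_a(x)$ depends only on the $\cA^{\g_0}$-component of $a^*xa$.

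Now fix $\g\in\hd$ and $x\in\cA^\g$. Using $(\cA^\b)^*=\cA^{\b^c}$ I would expand
\[
a^*xa=\sum_{\b,\d\in S}a_\b^*\,x\,a_\d,\qquad a_\b^*\in\cA^{\b^c},
\]
and apply the fusion rule twice, so that the $(\b,\d)$-term lies in $\cA^{\b^c\otimes(\g\otimes\d)}$. Hence it can have a nonzero $\cA^{\g_0}$-component only if $\g_0$ occurs in $\b^c\otimes\g\otimes\d$. By duality for corepresentations, the multiplicity of $\g_0$ in $\b^c\otimes(\g\otimes\d)$ equals the multiplicity of $\b$ in $\g\otimes\d$; and by Frobenius reciprocity $\b$ occurs in $\g\otimes\d$ exactly when $\g$ occurs in $\b\otimes\d^c$. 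So the $(\b,\d)$-term contributes to $\mu_a(x)$ only if $\g\in\b\otimes\d^c$.

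Therefore I would set
\[
F_a:=\bigcup_{\b,\d\in S}\big(\b\otimes\d^c\big),
\]
a finite union of the finite sets $\b\otimes\d^c$, hence a finite subset of $\hd$. If $\g\notin F_a$ then for every $\b,\d\in S$ we have $\g\notin\b\otimes\d^c$, so no term $a_\b^*xa_\d$ has a nonzero $\cA^{\g_0}$-component; consequently $\hr(a^*xa)=0$ for all $x\in\cA^\g$, i.e. $\mu_a(\cA^\g)=0$, as claimed. (The normalization $\|a\|_2=1$ enters only to make $\mu_a$ a state, via $\mu_a(1_\cA)=\|a\|_2^2$, and plays no role in the vanishing.)

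I expect the only delicate point to be the duality/fusion step: justifying that the trivial corepresentation sits inside the triple tensor product $\b^c\otimes\g\otimes\d$ precisely under a condition ($\g\in\b\otimes\d^c$) that confines $\g$ to a finite set determined by the support $S$ of $a$. Everything else---the finiteness of the isotypic support of $a$, the identity $\mu_a(x)=\hr(a^*xa)$, and the vanishing of $\hr$ off $\cA^{\g_0}$---is routine given the facts assembled above. A minor bookkeeping point to keep straight is that $\b\otimes\g$ denotes a finite \emph{set} of irreducibles, so when one tensor factor is itself reducible the fusion rule must be applied component by component.
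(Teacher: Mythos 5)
Your proof is correct, and it arrives at exactly the paper's finite set $F_a=\bigcup_{\b,\d\in S}\b\otimes\d^c$, but by a genuinely different route at the key step. The paper likewise decomposes $a$ into isotypic pieces and reduces to showing that each term $\hr(a_\b^*\, x\, a_\d)$ vanishes for $x\in\cA^\g$ with $\g$ outside a finite set; however, since $\hr$ is not a trace, the paper cannot pair $a_\b^*\,x\,a_\d$ against anything by orthogonality without first moving $a_\b^*$ to the right inside $\hr$, and for that it invokes Van Daele's weak KMS automorphism $\s$ (satisfying $\hr(ab)=\hr(b\s(a))$), verifies that $\s$ preserves isotypic components, and then uses orthogonality of the components in $L^2(\cA_c,\hr)$. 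You never permute factors inside $\hr$, so non-traciality is simply irrelevant to your argument: you use only that $\hr$ annihilates every nontrivial isotypic component, the fusion rule applied twice, and Frobenius reciprocity (writing $\g_0$ for the trivial corepresentation: $\g_0$ occurs in $\b^c\otimes\rho$ iff $\b$ occurs in $\rho$, and $\b$ occurs in $\g\otimes\d$ iff $\g$ occurs in $\b\otimes\d^c$). What each approach buys: the paper's stays entirely within the facts assembled in its preliminaries (fusion, conjugates, orthogonality, Van Daele's proposition 3.12 of \cite{vD98}), at the cost of introducing the modular automorphism; yours is shorter and insensitive to the non-tracial character of $\hr$, but the duality step you yourself flag as delicate --- rigidity/Frobenius reciprocity for the corepresentation category of an arbitrary compact quantum group, valid also in the non-Kac case --- is a standard fact that is nevertheless not recorded in the paper's preliminaries, so for completeness you should justify it from the conjugate equations or cite a reference such as \cite{DCm}.
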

 
\begin{proof}
Since $a\in\cA_c $, it is the finite sum of non-zero elements 
of various $\cA^\g $'s. Let $ F $ be the set of the corresponding
$\g $'s. Let $\d\in\hd $ and let $b\in\cA^\d $. Suppose that
$\mu_a (b)\neq 0 $, that is, $\hr (a^*ba)\neq 0 $. Then there 
must be $\g,\b \in F $, and $a_\g\in \cA^\g $ and 
$a_\b \in\cA^\b $, such that $\hr (a_\g^*ba_\b)\neq 0 $.

Now 
$\hr $ may not be a trace. But in proposition 3.12 
of \cite{vD98} Van Daele shows that there is an automorphism,
$\s $ of $\cA_c $ with the property that 
$\hr (ab) = \hr (b\s (a)) $ for all $a, b \in \cA_c $, and also that
$\hr (\s (a)) =\hr (a) $.  Van Daele calls this a ``weak KMS 
property''. (To see the relationship with KMS automorphisms 
see theorem 1.4 of \cite{Wr98} or section 2.1 of \cite{Tm08}.)
However, $\s $ is in general not a
$* $-automorphism. But $\s $ does 
carry isotypic components of $\cA_c $ into themselves.
To see this, let $\g \in \hd$ and let $d\in\cA^\g $ be given. Then for any 
$\b \in \hd$ with $\b\neq\g $, and any $c\in\cA^\b $ we have
$ 0 =\<c^*, d^*\> =\hr (dc^*) =\hr (c^*\s (d)) =\<\s (d),c\> $.
This implies that $\s (d)\in\cA^\g $. 

Returning to our original 
$\g $ and $\b $, we see that 
$\hr (a_\g^*ba_\b) =\hr (ba_\b\s (a_\g^*)) $, 
and that $\s (a_\g^*) $ is 
in $\cA^{\g^c} $. Since
$0  \neq \hr (ba_\b\s (a_\g^*)) = \<a_\b\s (a_\g^*), b^*\>$ 
and $a_\b\s (a_\g^*)$ is in $\cA^{\b \otimes \g^c}$
it follows that $\d $ is in $ \g\otimes \b ^c $.  

For $ F$ as defined above, let $ F^c $ denote the set of
all conjugates of elements of $ F $. Then let $ F_a $ 
be the set of all elements of $\hd $ that are contained in the 
tensor product of some element of $ F $ with some 
element of $ F^c $, in that order. We see that if $\g\in\hd $ 
and if $\g $ is \emph{not} in $ F_a $, 
then $\mu_a (\cA^\g) = 0 $. Note that the 
set $ F_a $ is finite.
\end{proof} 
 
Essentially by definition, a compact Lie group (not necessarily connected) 
has a faithful finite-dimensional unitary representation. 
By taking the direct sum of this representation with its 
conjugate if necessary, 
we can assume that this representation is self-conjugate.
By then taking the direct sum with the trivial one-dimensional representation if necessary, we can assume that this representation 
is self-conjugate and contains the trivial representation. 
There are many choices of such a faithful representation.
Let $ G $ be a compact Lie group, and let $(\cH, U)$ be a faithful 
finite-dimensional unitary representation that 
is self-conjugate and contains the trivial representation. 
Let $C(G;U)$ be the linear span of the coordinate functions of $ U $.
Then $ C (G; U) $ is a finite-dimensional subspace of $ C (G) $ that is closed under complex conjugation and contains the constant functions.
Thus the set of $\bR $-valued functions that it contains 
is an order-unit space, and when $ C (G) $ is viewed as a
commutative C*-algebra, 
we see that $ C (G; U) $ is an operator system.
Because the representation $(\cH, U)$ is faithful, the functions 
in $C (G; U) $ separate the points of $G $. Thus the 
(algebraic) subalgebra of $ C (G) $ generated by $ C (G; U) $ is dense 
in $C (G) $, according to the Stone-Weierstrass theorem. 
(Because $C (G; U) $ is finite-dimensional, we say that $ C (G) $
is ``finitely generated'', with $C (G; U) $ as generating set.) 
Let $\cA=C (G) $, and for 
 $n \in \bN $ let $\cA_n $  
be the linear span of products of $n $ elements of $C (G; U) $
(with $\cA_0 = \bC 1_\cA $).
Then each $\cA_n $ is a finite-dimensional operator system
in $\cA $, and for any $m, n \in \bN $ we have
$\cA_m \cA_n \subseteq \cA_{m+n} $. Thus
$\{\cA_n\} $ is a filtration of the C*-algebra $\cA$.
We view the $\cA_n $'s as Fourier truncations of $\cA$.
We now generalize this structure to the setting of compact 
quantum groups.

Let $(\cA, \D)$ be a compact quantum group and let 
$(\cH, u) $ be a finite-dimensional corepresentation of
$(\cA, \D)$. Much as above, we can arrange that this
corepresentation is self-conjugate and contains the 
trivial corepresentation (which 
is $u(z) = z \otimes 1_\cA$ for $z \in \bC$). 
Let $\cA_1 $ be the linear span of the coordinate 
elements of $u $, as defined above. It is a finite-dimensional
operator system in $\cA $. For each $n\in\bN $ 
let $\cA_n $  
be the linear span of products of $n $ elements of $\cA_1 $
(with $\cA_0 = \bC 1_\cA $).
Then each $\cA_n $ is a finite-dimensional operator system
in $\cA $, and for any $m, n \in \bN $ we have
$\cA_m \cA_n \subseteq \cA_{m+n} $. But in general
the union of the $\cA_n $'s is not dense in $\cA $.

\begin{definition}
\label{deffnd}
With notation as just above, we say that a finite-dimensional corepresentation
$(\cH, u) $ is \emph{faithful} if the union of all the $\cA_n $'s is  
dense in $\cA $ (so $\cA $ is finitely generated and 
the $\cA_n$'s form a filtration of $\cA$). A faithful 
corepresentation is often called a ``fundamental'' 
corepresentation. In the context of
\cite{CnvS, vSj2} we view the $\cA_n$'s as Fourier truncations
of $\cA$.

A compact quantum group that has a faithful finite-dimensional 
corepresentation is called a ``compact matrix quantum group''.
These are exactly the quantum generalization of compact Lie
groups.
\end{definition}

\begin{example}
\label{fingen}
Let $ \G $ be a finitely generated group. We can choose a finite 
generating set $S $ that is closed under taking inverses
and contains the identity element of $\G $. The irreducible
corepresentations of $\cA = C^*(\G  )$ (all versions) 
are all one-dimensional,
and correspond to the elements of $\G $. Given $x \in\G $,
for the corresponding corepresentation $u^x $
we can choose $\cH =\bC $, and define $u^x $ by
$u^x (z) = z\d_x \in \cH \otimes \cA = \cA$ for $z\in\bC $.
Let $u^S$ be the direct sum of the $u^x $'s as $x $
ranges over $S$. The range of $u^S $ can be naturally 
identified with  the subspace of
functions supported on $S $. It forms a finite dimensional 
operator system in $\cA $ that generates $\cA $. Thus it
determines a familiar filtration of $\cA =C^*(\G)$
(used e.g. in \cite{R18, OzR, ChRi}), whose elements
we view as Fourier truncations of $C^*(\G)$. 
In the case that $\G$ is $\bZ$ the elements of 
this filtration are exactly the
Fej\'er-Riesz operator systems of \cite{CnvS, vSj2}. 
\end{example}

%%%%%%%%%%%%%%%%%%%%%%%%%%%

\section{Preliminaries about actions}
\label{secact}

Just as actions of compact groups on compact spaces are 
of much importance, so too, actions of compact quantum 
groups on unital C*-algebras are of much importance.
We recall here the facts that we will need. For details, see
for example
\cite{BC05, Pd95, BdMT, DCm, Slt}.
Let $ (\cA,\D) $ be a compact quantum group, and let $\cB $
be a unital C*-algebra. By a (left) action of $ (\cA,\D) $ on 
$\cB $ we mean a unital injective 
$*$-homomorphism, $\a $, from $\cB $
into $\cB \otimes \cA $ such that
\begin{equation}
\label{eqact}
(\a \otimes I^\cA) \circ \a = (I^\cB \otimes \D)\circ \a
\end{equation}
and the linear span of $\a(\cB)(1_\cB \otimes \cA) $ 
is dense in $\cB \otimes \cA $. (Often $\a$ is called 
a ``coaction''. Also, some authors call this a ``right'' action. 
But Li in \cite{Lih5} calls it a ``left'' action, and since we make
important use of his results there we will follow his usage.) 
Notice that $\D $ can be viewed as 
giving an action of $\cA $ on itself that is appropriately viewed
as the ``left regular action'' of this quantum group.

 For each $\g \in \hd$ let
 $U^\g = \{u^\g_{jk}\} $ be a unitary matrix
in $M_n(\cA) $ that represents $\g $, and thus satisfies
\[
\D(u^\g_{jk}) = \sum ^n_{\ell =1} u^\g_{j\ell} \otimes u^\g_{\ell k} . 
\]
Then, as described in section 2 of \cite{Slt} (and a number
of other places beginning with \cite{Pd95} ),
there are elements $ \phi^\g_{ij}$ of $\cA'$ such that
\[
   \phi^\b_{ij}(u^\g_{kl}) = \d_{\b\g}\d_{ik}\d_{jl}
\]
for all $\b, \g$ and all appropriate {i, j, k, l}. 
For each $\g \in \hd $ define an operator, $ E^\g_\a $,
from $\cB $ to $\cB \otimes \bC = \cB$ by
\[
E_\a^\g = \sum_{k=1}^{n_\g} (I^\cB \otimes \phi^\g_{kk}) \circ \a  ,
\]
where $n_\g$ is the dimension of the corepresentation $\g $.
(Notice the hint of a trace in this formula.)
Then $ E^\g_\a $ is the projection onto the $\g $-isotypic 
component,  $\cB^\g $, of
$\cB $ for the action $\a $. The product of any two of these projections 
for different $\g $'s is 0. Also, 
$\a (\cB^\g)\subseteq \cB^\g\otimes \cA^\g $.

We denote the isotypic component of the trivial corepresentation 
by $\cB^\a $. It consists
exactly of the elements $b $ of $\cB $ that are $\a $-invariant,
that is, $\cB^\a = \{b \in \cB: \a(b) = b\otimes 1_\cA\}$. In 
lemma 4 of 
\cite{BC05} Boca shows that there is a canonical conditional 
expectation, $ E$, from $\cB $ onto $\cB^\a $, given by
$ E (b) = (I^\cB \otimes \hr)\a (b) $.

For our purposes we want the action $\a $ to be ergodic, that is, 
the isotypic component of the trivial representation, $\cB^\a $ 
should be 
exactly $\bC 1_\cB $. In this case, the conditional expectation 
$E$ is of the form $ E (b) = \om (b)1_\cB $ where $\om $ is the 
unique $\a $-invariant state on $\cB $, where $\phi  \in S (\cB) $ 
is said to be $\a $-invariant if 
$ (\phi \otimes \mu)\a (b) = \mu (1_ \cA)\phi (b) $ for all 
$\mu\in\cA'$ and $b\in \cB $. (Ergodic actions are sometimes
called ``quantum homogeneous spaces'', e.g. \cite{hsh}.)

Boca proved \cite{BC05}
that if $\a $ is ergodic then all of the isotypic components $\cB^\g $
are finite dimensional. Let $\cB_c $ be the algebraic direct sum
of all the isotypic components. It is a dense $*$-subalgebra
of $\cB$ that is an analog of the coordinate subalgebra
of $\cA$, and it is sometimes called the subalgebra
of ``regular'' elements of $\cB $.

For a finite subset $S$ of $\hd $
let us set $\cB^S = \bigoplus_{\g \in S} \cB^\g$.
Suppose that $ S $ is closed under taking conjugate 
corepresentations
and contains the trivial corepresentation. 
Then $\cB^S $ is an operator system in $\cB $.
For each 
$n \in\bN $ let $S^n$ denote the collection of all irreducible 
corepresentations that are contained in the $n$-fold
tensor products of elements of $S $. Let $S^0$ consist of just 
the trivial corepresentation, Then the $ S^n $'s form an increasing 
sequence of finite subsets of $ \hd$, each of which is closed under 
taking conjugate corepresentations,
and contains the trivial corepresentation. Thus the $\cB^{S^n}$'s
form an increasing sequence of operator systems in $\cB$.
Suppose further that $(\cA, \D)$ is a compact matrix quantum group,
and that the direct sum of the irreducible corepresentations in 
$ S $ is a faithful corepresentation of $(\cA, \D)$. 
Then the $\cB^{S^n}$'s form a filtration of $\cB$. These are 
the filtrations in which we are interested. In the context of
\cite{CnvS, vSj2} we can view the $\cB_n$'s as 
Fourier truncations of $\cB$.

We summarize the above discussion with:

\begin{proposition}
\label{profil}
Let $(\cA, \D)$ be a compact matrix quantum group,
and let $\a $ be an ergodic action of $(\cA, \D)$
on a unital C*-algebra $\cB $. Let $S $ be a finite subset 
of $\hd $ such that the direct sum of the elements of $S$ 
is a faithful unitary corepresentation of $(\cA, \D)$, and that
$S $ is closed under taking conjugate corepresentations,
and contains the trivial corepresentation. Then, with notation
as above, the Fourier truncations $\cB^{S^n}$'s 
form a filtration of $\cB$. 
\end{proposition}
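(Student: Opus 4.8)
The plan is to verify the three defining properties of a filtration for the sequence $\{\cB^{S^n}\}$: that each $\cB^{S^n}$ is a finite-dimensional operator system, that $\cB^{S^m}\cB^{S^n} \subseteq \cB^{S^{m+n}}$, and that $\bigcup_n \cB^{S^n}$ is dense in $\cB$. The first property has essentially been assembled in the discussion preceding the statement: $S^n$ is finite by induction (since $S$ is finite and each tensor product of finitely many finite-dimensional corepresentations decomposes into finitely many irreducibles), each $\cB^\g$ is finite-dimensional by Boca's theorem, and $S^n$ is closed under conjugation and contains the trivial corepresentation, so $\cB^{S^n} = \bigoplus_{\g \in S^n}\cB^\g$ is a finite-dimensional operator system; moreover $S^n \subseteq S^{n+1}$ because $S$ contains the trivial corepresentation, so the sequence is increasing. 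Thus the two genuine tasks are multiplicativity and density.

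For multiplicativity I would first prove the isotypic statement $\cB^\b \cB^\g \subseteq \cB^{\b \otimes \g}$, the analog for the action $\a$ of the relation $\cA^\b \cA^\g \subseteq \cA^{\b \otimes \g}$ recorded earlier. Let $b \in \cB^\b$ and $c \in \cB^\g$. Since $\a$ is a $*$-homomorphism and $\a(\cB^\b) \subseteq \cB^\b \otimes \cA^\b$, $\a(\cB^\g) \subseteq \cB^\g \otimes \cA^\g$, I would compute $\a(bc) = \a(b)\a(c) \in (\cB^\b \otimes \cA^\b)(\cB^\g \otimes \cA^\g) \subseteq \cB \otimes (\cA^\b \cA^\g) \subseteq \cB \otimes \cA^{\b \otimes \g}$. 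Then, applying the isotypic projection $E^\d_\a = \sum_k (I^\cB \otimes \phi^\d_{kk})\circ\a$ for any $\d$ not in $\b \otimes \g$, I would use that each $\phi^\d_{kk}$ annihilates $\cA^\epsilon$ for every irreducible $\epsilon \neq \d$ (immediate from the orthogonality relation $\phi^\b_{ij}(u^\g_{kl}) = \d_{\b\g}\d_{ik}\d_{jl}$), so $\phi^\d_{kk}$ vanishes on $\cA^{\b\otimes\g}$ and hence $E^\d_\a(bc) = 0$. Since $\cB_c$ is a $*$-subalgebra, $bc$ lies in $\cB_c$ and therefore equals the sum of its finitely many isotypic components, all of which are thereby supported in $\b \otimes \g$; this gives $bc \in \cB^{\b \otimes \g}$. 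Finally I would lift this to the filtration level: for $\b \in S^m$ and $\g \in S^n$ every irreducible occurring in $\b \otimes \g$ occurs in some $(m+n)$-fold tensor product of elements of $S$ (concatenate the two defining tensor products), so $\b \otimes \g \subseteq S^{m+n}$, and summing over isotypic components yields $\cB^{S^m}\cB^{S^n} \subseteq \cB^{S^{m+n}}$.

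For density, note that $\bigcup_n \cB^{S^n} = \bigoplus_{\g \in \bigcup_n S^n} \cB^\g$, so it suffices to show $\bigcup_n S^n = \hd$, after which this union equals $\cB_c$, which is dense in $\cB$. This exhaustion is exactly where faithfulness of the corepresentation enters, applied to $\cA$ itself under the left regular action $\D$. By the same induction as above one has $\cA_n \subseteq \cA^{S^n}$ (with $\cA_1 = \cA^S$), so if some $\g \in \hd$ failed to lie in any $S^n$ then $\cA^\g$ would be orthogonal, with respect to the Haar inner product, to every $\cA_n$ and hence to $\bigcup_n \cA_n$. Because $\|a\|_2 \le \|a\|$, density of $\bigcup_n \cA_n$ in $\cA$ forces density in $L^2(\cA_c, \hr)$, and since the Haar state is faithful on $\cA_c$ this would force the nonzero space $\cA^\g$ to vanish, a contradiction. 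Hence $\bigcup_n S^n = \hd$.

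I expect the multiplicativity step to be the main point of the argument; everything else is bookkeeping once it is in place. The place to be careful is the interaction of the projection functionals $\phi^\d_{kk}$ with the tensor-product factorization of $\a(bc)$ — specifically, justifying that $\a$ carrying the product into $\cB \otimes \cA^{\b\otimes\g}$, combined with the annihilation property of $\phi^\d_{kk}$, really pins down the isotypic support of $bc$, which relies on $bc$ being a regular element with a finite isotypic decomposition.
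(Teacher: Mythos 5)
Your proof is correct and takes essentially the route the paper intends: the paper states this proposition explicitly as a summary of the discussion preceding it and never writes out the two nontrivial points, namely the multiplicativity $\cB^{S^m}\cB^{S^n}\subseteq\cB^{S^{m+n}}$ and the exhaustion $\bigcup_n S^n=\hd$ forced by faithfulness. Your arguments for these — the isotypic-projection computation $E^\d_\a(bc)=0$ for $\d\notin\b\otimes\g$ using $\a(\cB^\g)\subseteq\cB^\g\otimes\cA^\g$ together with $\cA^\b\cA^\g\subseteq\cA^{\b\otimes\g}$, and the $L^2$-orthogonality argument combining density of $\bigcup_n\cA_n$, orthogonality of isotypic components, and faithfulness of the Haar state on $\cA_c$ — are exactly the standard fillings-in of those assertions and are correct.
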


%%%%%%%%%%%%%%%%%%%%%%

\section{Quantum metrics}
\label{secmet}
The definition of quantum metrics was given in \cite{R5} in the setting of 
order-unit spaces. Here we only need the definition for the case of 
operator systems (notably those of the filtrations 
discussed above), so we now recall the definition for that 
case \cite{Krr, KrL, Lih5, DRH}. 
We will need the fact that an operator system $\cC $ has 
a well-defined state space, which we will denote by $ S (\cC) $. 
It is compact for the weak-$* $ topology.
Let $\cC $  be an operator system. A quantum metric on $\cC $ is 
a seminorm, $L $, on $\cC $ that plays the role of assigning the Lipschitz
constant to functions on a compact metric space. 
As such, it can take the value 
$+\infty $. 
\begin{definition} 
\label{deflip}
Let $\cC $ be an operator system, and let $ L $ be a seminorm on $\cC $ 
that may take the value $+\infty $. We say that
$ L$ is a \emph{Lip-norm} if it satisfies the following properties: 
\begin{enumerate}
\item For any $c\in \cC $ we have $ L (c^*) = L (c) $, and 
$ L (c) = 0 $ if and only if $c\in \bC 1_\cC $. 
\item $\mathrm{Dom}(L):= \{c: L (c) < +\infty\}$ is dense in $\cC $ 
(and so is a dense subspace).
\item Define a metric, $d^L $, on $ S (\cC) $ by
\[
d^L (\mu, \nu) =\sup \{|\mu (c) -\nu (c)|: L (c)\leq 1 \}  .
\]
(A priori this can take the value $+\infty$. Also, 
an argument given just before definition 2.1 of \cite{R6}
shows that it suffices to take the supremum 
only over self-adjoint $c$'s.) We require that the topology 
on $ S (\cC) $ determined by this metric agrees with the 
weak-$*$ topology.
\item $L$ is lower semi-continuous with respect to the 
operator norm.
\end{enumerate}
We will call a pair $(\cC, L)$ with $\cC$ an operator
system and $L$ a Lip-norm on $\cC$ a ``metrized
operator system''.
\end{definition}
The third condition is the one that is often difficult to 
verify for specific examples, but it is crucial for our purposes. 
If the fourth condition is not satisfied, then the closure of $ L$ as defined in \cite{R5} will satisfy it,
with no change in the metric $d^L $. Since a compact space that is metrizable is separable (i.e. has a countable dense subset), any operator system on which a Lip-norm is defined must be separable.

In proposition 1.1 of \cite{R18} it is shown that if $E $ is a countable subset of a separable order-unit space $C $ then there are many Lip-norms on $ C $ that are finite on $E $. Let $ (\cA,\D) $ be a compact quantum group such that $\cA $ is separable, and let 
$\cA_c$ be the corresponding dense coordinate subalgebra. 
It follows that there are many Lip-norms on $\cA$ that are 
finite on $\cA_c $. In \cite{Lih5} Hanfeng Li calls such Lip-norms ``regular''. 
 
 It will be important for us that the Lip-norms that we use are 
 suitably invariant. Here are the definitions given by Li in \cite{Lih5}: 
 
\begin{definition}
 \label{labinv}
A regular Lip-norm $ L $ on a compact 
quantum group $ (\cA,\D) $ is said to be 
 ``left-invariant'' (or ``right-invariant'') if for
all $a\in \cA $ with $a^* =a $ and all $\mu \in S (\cA)$ we have
\[
L (a * \mu)\leq  L (a) \quad (\mathrm{or} \ \ L ( \mu * a)\leq  L (a))  
\]
where $ a* \mu  = (  \mu \otimes I^\cA  )\D(a) $ and similarly
for $\mu * a$.
Then $L $ is said to be ``bi-invariant'' if it is both left and right invariant. 
\end{definition}

Li shows in proposition 8.9  of \cite{Lih5} that if
$ L $ is any regular Lip-norm on $ (\cA,\D) $, and if $ (\cA,\D) $
is coamenable, then one obtains a left-invariant regular Lip-norm
$ L'$ by setting
\[
L'(a) =\sup_{\mu \in S (\cA)} L (a * \mu)   .
\]
There is a similar result producing right-invariant regular 
Lip-norms. Using these constructions, Li shows that one 
can obtain bi-invariant regular Lip-norms. 

We summarize the above results of Li that we need with:
\begin{proposition}
\label{inver}
 Let $ (\cA,\D) $ be a separable coamenable compact 
 quantum group. Then there exist (probably many)
bi-invariant regular Lip-norms on $\cA $.\end{proposition}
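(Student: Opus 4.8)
The plan is to build a bi-invariant regular Lip-norm in three stages: first produce any regular Lip-norm, then symmetrize it using Li's two averaging constructions, and finally check that the two invariances are compatible. To begin, I would record that regular Lip-norms exist at all. Since $\cA$ is separable, the set $\hd$ of irreducible corepresentation classes is countable and each isotypic component is finite-dimensional, so the coordinate algebra $\cA_c$ is countable-dimensional; choose a countable spanning set $E \subset \cA_c$, which we may take to consist of self-adjoint elements. Applying proposition 1.1 of \cite{R18} inside the separable order-unit space of self-adjoint elements of $\cA$ yields many Lip-norms finite on $E$, hence (by additivity and the relation $L(c^*) = L(c)$) finite on all of $\cA_c$; these are exactly the regular Lip-norms. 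Fix one and call it $L$.

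Next I would invoke Li's two averaging results. By proposition 8.9 of \cite{Lih5}, coamenability of $(\cA, \D)$ guarantees that
\[
L'(a) = \sup_{\mu \in S(\cA)} L(a * \mu)
\]
is again a regular Lip-norm, now left-invariant. I would then apply the right-handed analogue (the ``similar result'' quoted just after proposition 8.9 in the text) not to $L$ but to $L'$, setting
\[
L''(a) = \sup_{\nu \in S(\cA)} L'(\nu * a).
\]
By that cited result $L''$ is a regular Lip-norm that is right-invariant.

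The crux of the argument is to verify that this second averaging has not destroyed the left-invariance already secured for $L'$, so that $L''$ is genuinely bi-invariant. Here I would use the compatibility of the two mixed convolution products, namely the identity $\nu * (a * \mu) = (\nu * a) * \mu$ for states $\mu, \nu$ and $a \in \cA$, which is a direct consequence of coassociativity $(\D \otimes I^\cA)\D = (I^\cA \otimes \D)\D$. Granting it, for any state $\mu$ and self-adjoint $a$ I would compute
\[
L''(a * \mu) = \sup_{\nu} L'\big(\nu * (a * \mu)\big) = \sup_{\nu} L'\big((\nu * a) * \mu\big) \le \sup_{\nu} L'(\nu * a) = L''(a),
\]
the inequality being the left-invariance of $L'$ applied to the self-adjoint element $\nu * a$. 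Hence $L''$ is both left- and right-invariant, i.e. bi-invariant, and since the construction may be seeded with any of the many regular Lip-norms supplied by \cite{R18}, there are many such examples. I expect the only genuinely delicate point to be this compatibility identity for the mixed convolutions, together with the routine observation that $\nu * a$ stays self-adjoint when $a$ is; everything else reduces to direct citation of Li's propositions.
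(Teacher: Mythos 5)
Your proposal is correct and follows essentially the same route as the paper, which gives no detailed argument of its own but simply summarizes Li's results: existence of regular Lip-norms via proposition 1.1 of \cite{R18}, left-invariant averaging via proposition 8.9 of \cite{Lih5} and its right-handed analogue, and the remark that combining these yields bi-invariance. Your double-averaging argument, with the coassociativity identity $\nu * (a*\mu) = (\nu * a)*\mu$ verifying that the second averaging preserves the left-invariance of $L'$, is exactly the content of the step the paper defers to Li.
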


We emphasize that the Lip-norms on the algebras
discussed above 
may well not satisfy the Leibniz inequality
\begin{equation}
\label{eqleib}
L(aa') \leq L(a)\|a'\| + \|a\| L(a')  ,
\end{equation}
which they would satisfy if they came from a first-order differential 
calculus or a spectral triple.
Thus they may not relate particularly well to the
products on the algebras. (Of course, for operator systems that 
are not algebras this Leibniz condition has no meaning.) 

\begin{question}
How does one characterize the separable compact 
 quantum groups that admit a bi-invariant regular 
 Lip-norm that satisfies the 
 Leibniz inequality (or, even better, the strong Leibniz condition 
 defined in \cite{R21} and studied in \cite{AGKL}, 
 or better yet, comes from a spectral triple)? 
\end{question}
Examples of ones which do have Lip-norms coming from 
spectral triples can be found in \cite{R18, OzR, ChRi, KKd}. The compact 
quantum groups implicit in the first three of
these papers are those corresponding
to certain classes of finitely generated groups. But many 
related questions remain for other classes 
of finitely generated groups.

In the next section
we will need the results we give below, which are mostly due to Li.
Much as in section 2 of \cite{R5},
we define the radius of a metrized operator
system $(\cC, L) $ to be half of the 
diameter of the compact metric space $ (S (\cC), d^L) $.
(For a thorough discussion of the nuances concerning Lip-norms
on operator systems as opposed to order-unit spaces see
section 2 of \cite{KKd}.)
Equivalently, according to proposition 2.2 of \cite{R5},
$r_\cC $ is the smallest constant, $r$, such that for any $c \in\cC $ 
with $c^* =c $
we have $\|c\|' \leq rL(c) $, where $\|\cdot \|'$ is the quotient 
norm on $\cC/\bC 1_\cC $ (so that there is some $t\in \bR $
such that $\|c-t1_\cC\| \leq  rL(c) $).

\begin{lemma} 
\label{lemrad}
Let $(\cC, L) $ be a metrized operator system.
For any $c\in\cC $ with $c^* =c $ and any $\mu\in S(\cC) $
we have $\|c- \mu(c)1_\cC\| \leq 2r_\cC L(c)$.
\end{lemma}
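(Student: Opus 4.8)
The plan is to exploit the characterization of the radius $r_\cC$ recalled just before the lemma statement: for any self-adjoint $c\in\cC$ there is a \emph{real} scalar $t$ with $\|c-t1_\cC\|\leq r_\cC L(c)$. (The infimum defining the quotient norm $\|c\|'$ on $\cC/\bC 1_\cC$ is attained, since $t\mapsto\|c-t1_\cC\|$ is continuous and coercive on $\bR$, and for self-adjoint $c$ it suffices to minimize over real $t$.) Fixing such a $t$, the strategy is to compare both the target vector $c-\mu(c)1_\cC$ and the desired bound to the single good representative $c-t1_\cC$, rather than working with $\mu(c)$ directly.

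First I would write $c-\mu(c)1_\cC=(c-t1_\cC)-(\mu(c)-t)1_\cC$ and apply the triangle inequality, obtaining $\|c-\mu(c)1_\cC\|\leq\|c-t1_\cC\|+|\mu(c)-t|$. The second step is to control the scalar $|\mu(c)-t|$: because $\mu$ is a state it is unital, so $\mu(c)-t=\mu(c-t1_\cC)$, and since $\|\mu\|=1$ we get $|\mu(c)-t|\leq\|c-t1_\cC\|$. Combining the two steps yields $\|c-\mu(c)1_\cC\|\leq 2\|c-t1_\cC\|\leq 2r_\cC L(c)$, which is the assertion.

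There is no genuine obstacle here; this is essentially a one-line application of the triangle inequality together with the norm-one, unital nature of states. The only point that requires care is that one must anchor the estimate on a representative $c-t1_\cC$ whose norm is \emph{already} controlled by $r_\cC L(c)$, since $\mu(c)$ need not be the scalar minimizing the quotient norm. The factor of $2$ in the conclusion is exactly the price paid for replacing the optimal scalar $t$ by the possibly suboptimal value $\mu(c)$.
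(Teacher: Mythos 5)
Your proof is correct and follows essentially the same route as the paper's: both anchor on a real scalar $t$ with $\|c-t1_\cC\|\leq r_\cC L(c)$, apply the triangle inequality to $c-\mu(c)1_\cC=(c-t1_\cC)-(\mu(c)-t)1_\cC$, and bound $|\mu(c)-t|=|\mu(c-t1_\cC)|\leq\|c-t1_\cC\|$ using that $\mu$ is a unital, norm-one state. Your side remark on the attainment of the quotient-norm infimum is a harmless sharpening of the paper's parenthetical claim that such a $t$ exists.
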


\begin{proof}
Given $c\in\cC $ with $c^* =c $, choose
 $t\in \bR $
such that $\|c-t1_\cC\| \leq  r_\cC L(c) $).
Then for any $\mu\in S(\cC) $ we have
\begin{align*}
\|c- \mu(c)1_\cC\| &\leq \|c- t1_\cC\| + \|t1_\cC - \mu(c)1_\cC\|  \\
&\leq  r_\cC L(c) + \|\mu(t1_\cC - c)1_\cC\|   
\leq 2r_\cC L(c).
\end{align*} .
\end{proof}

\begin{corollary}
\label{corhr}
(lemma 8.4 of \cite{Lih5})
Let $ (\cA,\D) $ be a compact quantum group with Haar
state $\hr $, 
and let $L $ be a regular Lip-norm on
$ (\cA,\D) $.
Then for any $a\in\cA $ with $a^* =a $ we 
have 
\[
\|a- \hr(a)1_\cA\| \leq 2r_\cA L(a).
\]
\end{corollary}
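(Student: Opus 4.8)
The plan is to recognize this as nothing more than the special case $\mu = \hr$ of Lemma \ref{lemrad}. By construction the Haar state is a state on $\cA$, so $\hr \in S(\cA)$; and since $L$ is a (regular) Lip-norm, the pair $(\cA, L)$ is a metrized operator system, so its radius $r_\cA$ is well-defined. I would therefore simply invoke Lemma \ref{lemrad} with $\cC = \cA$ and $\mu = \hr$: for any self-adjoint $a \in \cA$ this gives at once
\[
\|a - \hr(a)1_\cA\| \leq 2r_\cA L(a),
\]
which is exactly the claimed inequality.

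There is essentially no obstacle here, since all the work has already been done in Lemma \ref{lemrad}; the only point to check is that $\hr$ is an admissible choice of functional in that lemma, and this is immediate from the fact that it is a state (a positive unital functional), hence an element of $S(\cA)$. The underlying mechanism, recalled for emphasis, is the definition of the radius: one first picks $t \in \bR$ with $\|a - t1_\cA\| \leq r_\cA L(a)$, and then estimates $|\hr(a) - t| = |\hr(a - t1_\cA)| \leq \|a - t1_\cA\| \leq r_\cA L(a)$, so that the triangle inequality absorbs the two contributions into the factor of $2$. When $L(a) = +\infty$ the inequality is vacuously true, so no separate argument is needed in that case. The reason it is worth isolating this statement as a corollary, rather than leaving it inside the lemma, is that $\hr$ is the one canonical state attached to $(\cA, \D)$, and the bound $\|a - \hr(a)1_\cA\| \leq 2r_\cA L(a)$ is precisely the form that will be convenient when comparing the truncations $\cB_n$ with $\cB$ via the Haar-averaged conditional expectations in the following section.
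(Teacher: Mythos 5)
Your proposal is correct and matches the paper's approach exactly: the corollary is stated immediately after Lemma \ref{lemrad} precisely because it is the specialization $\cC = \cA$, $\mu = \hr$ of that lemma, which is admissible since $\hr \in S(\cA)$ and $(\cA, L)$ is a metrized operator system. Nothing further is needed.
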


The following key proposition is a general version of much 
of lemma 8.6 of \cite{Lih5}.

\begin{proposition}
\label{dense}
Let $\cA $ be a unital C*-algebra, and let $ L$ be a 
Lip-norm on $\cA $. Let $ (\cH, \pi) $ be a 
faithful $*$-representation of
$\cA $, and let $\cK $ be a dense subspace of $\cH $.  
Let $S_\cK (\cA) $ be the set of finite convex combinations 
of vector states determined by unit vectors in $\cK $.
Then for every $\mu\in S (\cA) $ and every $\e >0 $
there exists a $\nu\in S_\cK (\cA) $ such that
\[
|\mu (a) -\nu (a)| \leq \e L (a)
\]
for all $a\in\cA $.
\end{proposition}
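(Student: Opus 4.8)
The plan is to read the asserted inequality as a statement about the metric $d^L$ of Definition \ref{deflip}. Indeed, ``$|\mu(a)-\nu(a)|\le\e L(a)$ for all $a\in\cA$'' is precisely the assertion that $d^L(\mu,\nu)\le\e$: for $a$ with $0<L(a)<\infty$ one rescales to $a/L(a)$, for $L(a)=0$ both sides vanish because $a\in\bC 1_\cA$ and states agree there, and for $L(a)=+\infty$ the inequality is vacuous. So the proposition is equivalent to the density of $S_\cK(\cA)$ in $(S(\cA),d^L)$. Here the Lip-norm hypotheses enter through condition (3) of Definition \ref{deflip}: the $d^L$-topology coincides with the weak-$*$ topology on $S(\cA)$, so each $d^L$-ball about $\mu$ is a weak-$*$ neighborhood of $\mu$. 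It therefore suffices to show that $S_\cK(\cA)$ is \emph{weak-$*$} dense in $S(\cA)$; choosing $\nu\in S_\cK(\cA)$ in the weak-$*$ (hence $d^L$-) ball of radius $\e$ about $\mu$ then yields the claim.

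For the weak-$*$ density I would argue by separation. Let $K$ denote the weak-$*$ closure of $S_\cK(\cA)$; since $S(\cA)$ is weak-$*$ compact and $S_\cK(\cA)$ is convex, $K$ is a weak-$*$ compact convex subset of $S(\cA)$. Suppose, for contradiction, that there is $\mu_0\in S(\cA)\setminus K$. Working in $\cA'$ with its weak-$*$ topology, whose continuous dual is $\cA$ itself, the Hahn--Banach separation theorem provides $a\in\cA$ and $t\in\bR$ with $\Re\,\nu(a)\le t<\Re\,\mu_0(a)$ for every $\nu\in K$. Replacing $a$ by its self-adjoint part $(a+a^*)/2$, which leaves $\Re\,\nu(a)$ unchanged on states, we may assume $a=a^*$, so that $\nu(a)\le t$ for all $\nu\in K$; in particular $\<\xi,\pi(a)\xi\>\le t$ for every unit vector $\xi\in\cK$.

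The contradiction is reached using the density of $\cK$ and the faithfulness of $\pi$. Because $\xi\mapsto\<\xi,\pi(a)\xi\>$ is norm-continuous and $\cK$ is dense in $\cH$, the bound $\<\xi,\pi(a)\xi\>\le t$ passes from unit vectors of $\cK$ to all unit vectors of $\cH$, giving $\pi(a)\le tI$ on $\cH$. Since $\pi$ is a faithful $*$-representation (necessarily unital, so that $\pi(1_\cA)=I$ and the vector states are genuine states), it is isometric and reflects positivity; from $\pi(t1_\cA-a)=tI-\pi(a)\ge0$ we conclude $a\le t1_\cA$ in $\cA$. Evaluating at $\mu_0$ gives $\mu_0(a)\le t$, contradicting $\Re\,\mu_0(a)>t$. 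Hence $K=S(\cA)$, i.e.\ $S_\cK(\cA)$ is weak-$*$ dense, and the proposition follows. I expect the genuine content to lie not in this standard separation argument but in the passage from weak-$*$ density to the \emph{uniform} estimate $|\mu(a)-\nu(a)|\le\e L(a)$: this is exactly what the coincidence of the $d^L$-topology with the weak-$*$ topology (condition (3) of Definition \ref{deflip}) provides, and is the reason the Lip-norm structure, rather than bare faithfulness, is used. The only technical care within the separation step is the extension of the operator inequality from the dense subspace $\cK$ to all of $\cH$.
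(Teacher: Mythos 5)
Your proof is correct, but it reaches the conclusion by a different mechanism than the paper's own argument. Both proofs rest on the same key fact, the weak-$*$ density of $S_\cK(\cA)$ in $S(\cA)$, but they differ in how that density is upgraded to the uniform estimate $|\mu(a)-\nu(a)|\le\e L(a)$. You observe that this estimate is literally the statement $d^L(\mu,\nu)\le\e$ (using condition (1) of Definition \ref{deflip} to handle $L(a)=0$, and rescaling otherwise), and then invoke condition (3) in its nontrivial direction: $d^L$-open balls are weak-$*$ open, so weak-$*$ density immediately gives $d^L$-density. The paper never mentions the metric $d^L$ in its proof; instead it invokes theorem 4.5 of \cite{R6} (the criterion by which condition (3) is verified in practice) to conclude that the ball $B_L=\{a : a^*=a,\ L(a)\le 1,\ \|a\|\le r_\cA\}$ is totally bounded in norm, picks a finite $\e/4$-net $a_1,\dots,a_m$ of it, chooses $\nu\in S_\cK(\cA)$ weak-$*$ close to $\mu$ only at these finitely many elements, and then obtains the estimate for every $a$ by the triangle inequality, treating non-self-adjoint $a$ via real and imaginary parts. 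These are two faces of the same coin---total boundedness of $B_L$ is precisely what makes the $d^L$-topology agree with the weak-$*$ topology---but your route is shorter and more conceptual, while the paper's is quantitative: it exhibits $\nu$ as a state matching $\mu$ on finitely many prescribed elements, which makes visible the finite-dimensional mechanism behind the approximation. A further difference is that the paper merely cites the weak-$*$ density of $S_\cK(\cA)$ (``argue as for corollary T.5.10 of \cite{WO}''), whereas you prove it in full by Hahn--Banach separation in $(\cA',\mathrm{weak}\text{-}*)$, using faithfulness of $\pi$ to pull the operator inequality $\pi(a)\le tI$ back to $a\le t1_\cA$; that argument is correct. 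One small caveat: faithfulness alone does not force $\pi(1_\cA)=I$ (a faithful representation can be non-unital), so your parenthetical ``necessarily unital'' is not literally accurate; rather, unitality is implicitly required for unit vectors of $\cK$ to define states at all, and it holds in the paper's application (the GNS representation of the Haar state), so this is a harmless imprecision rather than a gap.
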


\begin{proof}
 Let $r_\cA $ be the radius of $ (S(\cA), d^L) $,
as defined before Lemma \ref{lemrad}, and let 
\[
B_ L =\{a\in\cA: \ a^* =a, \ L (a)\leq 1, \ \|a\| \leq r_\cA \}.
\]
Then if  $a^*=a $ and $L (a)\leq 1 $, there is a $t\in\bR $ such that
$a -t1_\cA $ is in $ B_ L $. By the basic criterion for 
verifying property 3 in the definition of a 
Lip-norm (see theorem 4.5 of
\cite{R6}) $B_L $ is a totally bounded subset of $ \cA $.

Let $\mu $ and $\e>0 $ be given. Let $a_1, \cdots, a_m $ be 
elements of $ B_L $ such that 
the balls around them of radius $\e /4 $ cover $ B_ L $. 
It is easily seen that  $S_\cK (\cA) $ is dense in $ S (\cA) $ 
for the weak-$*$ topology. (Argue as for corollary T.5.10 of \cite{WO}.)
Accordingly, we can find a $\nu\in S_\cK (\cA) $ such that
$\|\mu (a_j) -\nu (a_j)\| \leq \e/2 $ for $j =1, \cdots , m $. 

Let $a \in \cA$. We only need to consider 
$a $'s for which $ L (a)<\infty $. By scaling we can assume that
$ L (a) = 1$. For such an $a $ with $a^* =a $,
there is a $t\in\bR $ such that
$a-t1_\cA $ is in $B_ L $. Thus there is a $j_0 $ such that
$\|a -t1_\cA -a_{j_0}\| \leq \e/4 $. Then 
\begin{align*}
|\mu (a) -\nu (a)| &= | (\mu -\nu) (a -t1_\cA)|   \\
&\leq | (\mu -\nu) (a -t1_\cA - a_{j_0})| + | (\mu -\nu) (a_{j_0})|   \\
&\leq \e/2 +\e/2 =\e = \e L (a).
\end{align*}
For general $a \in \cA$ we can replace $\e$ with $\e/2$ in the
above argument, and apply the result to the real and imaginary 
parts of $a$.
\end{proof}

\begin{proposition} (Lemma 8.6 of \cite{Lih5})
\label{profin}
Let $ (\cA,\D) $ be a compact quantum group and let $ L $ be a 
regular Lip-norm on $\cA $. Assume that $ (\cA,\D) $ is in reduced form, 
that is, its Haar state is faithful. Then for any $\mu\in S (\cA) $ and 
any $\e >0 $ there is a $\nu\in S (\cA) $ and a finite subset 
$F \subseteq \hd $ such that 
\[
|\mu (a) -\nu (a)| \leq \e L (a)  
\]
for all $a\in\cA $, but  $\nu (\cA^\g) = 0 $ whenever $\g\in\hd $ but
$\g$ is \emph{not} in $F $.
\end{proposition}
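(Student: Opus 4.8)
The plan is to combine the two immediately preceding propositions, which were set up precisely for this purpose. Proposition \ref{dense} lets us approximate an arbitrary state metrically by a finite convex combination of vector states coming from a dense subspace, while Proposition \ref{provs} guarantees that each such vector state, when its defining vector lies in $\cA_c$, annihilates all but finitely many isotypic components $\cA^\g$. So the two results fit together directly once the representation is chosen correctly.

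First I would exploit the reduced-form hypothesis. Since the Haar state $\hr$ is faithful, the GNS representation $(\cH, \pi)$ of $\cA$ on $\cH = L^2(\cA_c, \hr)$ is faithful, and $\cA_c$ sits inside $\cH$ as a dense subspace. This places us exactly in the situation of Proposition \ref{dense} with $\cK = \cA_c$. Given $\mu \in S(\cA)$ and $\e > 0$, applying that proposition produces a state $\nu \in S_\cK(\cA)$ satisfying $|\mu(a) - \nu(a)| \leq \e L(a)$ for all $a \in \cA$. By the definition of $S_\cK(\cA)$ we may write $\nu = \sum_{i=1}^k t_i \mu_{a_i}$ as a \emph{finite} convex combination (so $t_i \geq 0$ and $\sum_i t_i = 1$) of vector states determined by unit vectors $a_1, \dots, a_k \in \cA_c$, that is, with $\|a_i\|_2 = 1$ for each $i$.

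Finally I would apply Proposition \ref{provs} to each $a_i$, obtaining a finite set $F_{a_i} \subseteq \hd$ with $\mu_{a_i}(\cA^\g) = 0$ whenever $\g \notin F_{a_i}$, and then set $F = \bigcup_{i=1}^k F_{a_i}$. As a finite union of finite sets, $F$ is finite; and for $\g \notin F$ every term $\mu_{a_i}(\cA^\g)$ vanishes, so $\nu(\cA^\g) = \sum_i t_i \mu_{a_i}(\cA^\g) = 0$. This yields the required $\nu$ and $F$. I do not expect a serious obstacle, since the analytic substance has already been packaged into Propositions \ref{provs} and \ref{dense}; the only points demanding care are that the reduced-form hypothesis is exactly what makes the Haar-state GNS representation faithful, so that Proposition \ref{dense} applies with $\cK = \cA_c$, and that the finiteness of $F$ rests on $\nu$ being a finite convex combination.
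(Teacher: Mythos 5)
Your proposal is correct and follows essentially the same route as the paper's own proof: use the reduced-form hypothesis to get a faithful Haar-state GNS representation, apply Proposition \ref{dense} with $\cK = \cA_c$, then apply Proposition \ref{provs} to each of the finitely many vectors defining $\nu$ and take $F$ to be the union of the resulting finite sets. The only difference is cosmetic — you spell out the final convexity step that the paper dismisses with ``it follows quickly.''
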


\begin{proof}
Let $ (\cH,\pi) $ be the GNS representation of $\cA $ for the 
Haar state.
Since $ (\cA,\D) $ is in reduced form, this representation is faithful. 
Let $\cK $ be the image in $\cH $ of the dense subalgebra 
$\cA_c $ of $\cA $. Then $\cK $ is a dense subspace of $\cH $.
Let  $\mu\in S (\cA) $ and 
$\e >0 $ be given.
Then according to Proposition \ref{dense} there is a finite convex 
combination, $\nu $, of vector states using vectors from
$\cK $, such that 
\[
|\mu (a) -\nu (a)| \leq \e L (a)
\]
for all $a\in\cA $. Let $a_1, ..., a_m $ be the 
elements of $\cA_c $ of length-one in $\cK$  
determining the vector 
states whose convex combination is $\nu $, 
and for each $j: 1\leq j \leq m $
let $ F_ {a_j} $ be defined as in Proposition \ref{provs}.
Set $ F = \bigcup F_ {a_j} $. From Proposition \ref{provs}
it follows quickly that $F $ has the desired properties.
\end{proof}

%%%%%%%%%%%%%%%%%%%%%% 

\section{Induced Lip-norms for actions}
\label{secgh}

Suppose now that $\a $ is an action of a compact quantum 
group $ (\cA,\D) $ on a unital C*-algebra $ \cB $. If $\cA $ is 
separable then $\hd $ is countable. If also $\a $ is ergodic, 
then $\cB $ is separable. In theorem 1.4 and section 8
of \cite{Lih5} Li 
shows that if $ (\cA,\D) $ also is coamenable (and 
$\a $ is ergodic),
then any regular Lip-norm $ L^\cA$ on $\cA $ induces a 
corresponding Lip-norm 
$ L^\cB $ on $\cB $, defined by
\begin{equation}
\label{eqind}
L^\cB (b) =\sup_{\phi \in S (\cB)} L^\cA(b*\phi)
\end{equation}
where $b*\phi = (\phi \otimes I^\cA) \a(b) $,
and that $ L^\cB $ is regular on $\cB $ in the sense that 
it is finite on $\cB_c $. As Li remarks, this is a generalization 
to the setting of compact quantum groups of the 
corresponding fact for actions of ordinary compact groups 
given in theorem 2.3 of \cite{R4}.
 
 Let $L^\cB$ be any regular Lip-norm on $\cB $. Then Li 
 calls $ L^\cB $ ``$\a$-invariant" if
 \begin{equation}
 \label{eqinv}
  L^\cB ( \mu * b)\leq  L^\cB (b)
 \end{equation}
for all $b \in \cB $ with $b^* = b $ and all $\mu \in S (\cA)$,
where $ \mu * b = (I^\cB \otimes \mu)\a(b)$. Li shows that
if $ L^\cB $ is induced as above from a right-invariant 
regular Lip-norm on $\cA $ then $ L^\cB $ is $\a $-invariant.
(Note that $ (\mu, b) \mapsto \mu* b $ gives in action of
the algebra $\cA'$, with its convolution product, on $\cB $.)

We will now present an important
step in Li's proof of these results, since this step is  
crucial for our purposes. 
\begin{lemma}
\label{lemcid}
Let $ (\cA,\D) $ be a compact quantum group, and let
 $\a $ be an action of $ (\cA,\D) $ on a unital C*-algebra 
 $ \cB $. Assume that $ (\cA,\D) $ is coamenable, and let 
 $\ep $ be the coidentity element (which is a 
 $*$-homomorphism from $\cA $ into $\bC $,
 so an element of $S (\cA) $). Then
 $\ep *b =b $ for all $b \in \cB $, that is,
 $
 (I^\cB \otimes \ep)\circ \a = I^\cB  .
 $
\end{lemma}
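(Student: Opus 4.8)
The plan is to show that the bounded operator $T := (I^\cB \otimes \ep)\circ\a$ on $\cB$ equals the identity, by first establishing the operator identity $\a\circ T = \a$ and then invoking the injectivity of $\a$.

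First I would record what coamenability buys us. Since $(\cA,\D)$ is coamenable, $\ep$ is a genuine (continuous) state on $\cA$, so the slice map $I^\cB \otimes \ep : \cB \otimes \cA \to \cB$ is bounded and $T$ is a well-defined bounded linear map on $\cB$. Coamenability also guarantees that the counit identities $(\ep \otimes I^\cA)\circ\D = I^\cA = (I^\cA \otimes \ep)\circ\D$, which hold algebraically on the Hopf $*$-algebra $\cA_c$, extend by continuity to all of $\cA$: indeed $\D$ is a continuous $*$-homomorphism and $\ep$ is continuous, so $(I^\cA\otimes\ep)\circ\D$ is a continuous map agreeing with $I^\cA$ on the dense subalgebra $\cA_c$.

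The heart of the argument is then to apply the bounded slice map $I^\cB \otimes I^\cA \otimes \ep : \cB\otimes\cA\otimes\cA \to \cB\otimes\cA$ to both sides of the action axiom $(\a\otimes I^\cA)\circ\a = (I^\cB\otimes\D)\circ\a$. On the right-hand side the composition $(I^\cB\otimes I^\cA\otimes\ep)(I^\cB\otimes\D)$ equals $I^\cB\otimes\bigl[(I^\cA\otimes\ep)\circ\D\bigr] = I^\cB\otimes I^\cA$ by the counit identity, so the right side collapses to $\a$. On the left-hand side a short computation on elementary tensors shows $(I^\cB\otimes I^\cA\otimes\ep)(\a\otimes I^\cA) = \a\circ(I^\cB\otimes\ep)$, so the left side becomes $\a\circ(I^\cB\otimes\ep)\circ\a = \a\circ T$. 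Comparing the two yields $\a\circ T = \a$, and since $\a$ is injective (part of the definition of an action) we conclude $T = I^\cB$, i.e. $\ep * b = b$ for all $b\in\cB$.

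I expect the only real subtlety to be the point already isolated in the first step: the counit identity $(I^\cA\otimes\ep)\circ\D = I^\cA$ and the slice maps are a priori meaningful only at the algebraic level on $\cA_c$, and it is exactly coamenability that lets $\ep$ act as a genuine continuous state, so that all of these identities extend to the C*-completion. Everything else is routine Hopf-algebra bookkeeping. As an alternative, more hands-on route that avoids slice-map manipulations, one can instead verify $T = I^\cB$ directly on each isotypic component $\cB^\g$, using $\a(\cB^\g)\subseteq \cB^\g\otimes\cA^\g$ together with $\ep(u^\g_{jk}) = \d_{jk}$, and then extend from the dense subalgebra $\cB_c$ to $\cB$ by continuity of $T$; both routes reduce to the same continuity point supplied by coamenability.
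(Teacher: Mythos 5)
Your proof is correct, but it closes the argument differently from the paper. Both arguments hinge on the same two facts: coamenability makes $\ep$ a continuous character on $\cA$ (so the counit identity extends from $\cA_c$ to $\cA$ and all slice maps are bounded), and the action axiom \eqref{eqact} is then sliced with $\ep$. The difference is \emph{which leg} you slice and \emph{which axiom of an action} finishes the proof. You apply $I^\cB\otimes I^\cA\otimes\ep$ (the last factor), use the counit identity $(I^\cA\otimes\ep)\circ\D=I^\cA$ to get $\a\circ T=\a$, and then invoke the injectivity of $\a$, which is indeed part of the paper's definition of an action, so your proof is complete as written. The paper (following Skalski--Zacharias) instead applies $I^\cB\otimes\ep\otimes I^\cA$ (the middle factor), uses the other counit identity $(\ep\otimes I^\cA)\circ\D=I^\cA$ to get $(\a_o\otimes I^\cA)\circ\a=\a$ with $\a_o=(I^\cB\otimes\ep)\circ\a$, and then concludes $\a_o\otimes I^\cA=I^\cB\otimes I^\cA$ from the density of $\mathrm{span}\,\a(\cB)(1_\cB\otimes\cA)$ in $\cB\otimes\cA$ (the cancellation/non-degeneracy axiom), never using injectivity. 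What the paper's route buys is robustness: in settings where injectivity of $\a$ is not assumed but derived, that argument still works, and in fact it \emph{proves} injectivity as a by-product, since $(I^\cB\otimes\ep)\circ\a=I^\cB$ exhibits a left inverse for $\a$. What your route buys is brevity: one slice-map identity plus injectivity, with no appeal to the density condition. Your suggested alternative via isotypic components would also work but quietly invokes the Podle\'s density theorem (density of $\cB_c$ in $\cB$ for a general, not necessarily ergodic, action), which is a heavier tool than either slice-map argument.
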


\begin{proof}
(a fragment from the proof of lemma 2.2 of \cite{SkZ})
Set $\a_o =  (I^\cB \otimes \ep)\circ \a $. On multiplying 
equation \eqref{eqact} on the left by 
$I^\cB \otimes \ep \otimes I^\cA $ and simplifying, we obtain
\[
(\a_o \otimes I^\cA)\circ \a = \a   .
\]
Then for all $a\in \cA $ and $b \in\cB $ we have
\[
(\a_o \otimes I^\cA) ( (\a (b)) (1_\cB \otimes a))
= ((\a_o \otimes I^\cA)\a (b)) (1_\cB \otimes a)
=  \a (b) (1_\cB \otimes a)  .
\]
Thus $\a_o \otimes I^\cA $ coincides with 
$I^\cB \otimes I^\cA $ on $\a (\cB) (1_\cB \otimes \cA) $. 
But the latter spans a dense subspace of 
$\cB \otimes \cA $ according to the non-degeneracy hypothesis 
in the definition of an action. It follows that
$\a_o \otimes I^\cA  = I^\cB \otimes I^\cA $, and so
$\a_o = I^\cB$ as desired.
\end{proof}

\begin{proposition} (Related to lemma 8.7 of \cite{Lih5})
\label{proav}
Let $ (\cA,\D) $ be a compact quantum group, and let
 $\a $ be an ergodic action of $ (\cA,\D) $ on a unital C*-algebra 
 $ \cB $.
Let $ L^\cA $ be a regular right-invariant Lip-norm on $\cA $. 
Assume that $ (\cA,\D) $ is coamenable, and
let $L^ \cB $ be defined by equation \eqref{eqind},
so that $L^ \cB $ is an $\a$-invariant regular 
Lip-norm on $\cB$.
Let $\ep $ be the coidentity of $\cA$,
viewed as an element of $ S(\cA) $.
Let $\e > 0$ be given, and let $\ep $ be used as $\mu $
in Proposition \ref{profin} to produce the state $\nu $
and the finite set $ F $ with the properties described 
in that proposition. Let $ P_\nu $ be the operator on
$ \cB $ defined by 
\[
P_ \nu (b) =\nu *b = (I^\cB \otimes \nu)\a (b)
\]
for $b \in \cB $.
Then the range of $ P_\nu $ is contained in $\cB^F $
(the direct sum of the isotypic components $\cB^\g $ for
$\g\in F $), and
\[
L^\cB( P_\nu (b)) \leq L^\cB(b) \quad \quad and \quad \quad
\|b -P_\nu (b)\| \leq \e L^\cB (b)   .
\]
for all $b\in\cB $ with $b^* =b $.
\end{proposition}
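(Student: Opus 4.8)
The plan is to treat the three assertions separately, since they draw on different ingredients. For the claim that the range of $P_\nu$ lies in $\cB^F$, I would first work on the dense regular subalgebra $\cB_c$. Recall from Section \ref{secact} that $\a(\cB^\g) \subseteq \cB^\g \otimes \cA^\g$ for each $\g \in \hd$. Hence for $b \in \cB^\g$, writing $\a(b) = \sum_i b_i \otimes a_i$ with $a_i \in \cA^\g$, we get $P_\nu(b) = \sum_i \nu(a_i)\, b_i \in \cB^\g$. By Proposition \ref{profin} the state $\nu$ vanishes on $\cA^\g$ whenever $\g \notin F$, so $P_\nu(b) = 0$ for such $\g$; thus $P_\nu$ carries each $\cB^\g$ into itself and annihilates those with $\g \notin F$. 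Since every element of $\cB_c$ is a finite sum of isotypic components, $P_\nu(\cB_c) \subseteq \cB^F$. Finally $P_\nu$ is norm-contractive (as $\nu$ is a state and $\a$ is a $*$-homomorphism, $\|P_\nu(b)\| \le \|\a(b)\| \le \|b\|$), $\cB_c$ is dense in $\cB$, and $\cB^F$ is finite-dimensional hence closed, so the inclusion passes to all of $\cB$.

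The inequality $L^\cB(P_\nu(b)) \le L^\cB(b)$ for self-adjoint $b$ is immediate from the hypotheses: $P_\nu(b) = \nu * b$ with $\nu \in S(\cA)$, and $L^\cB$ is $\a$-invariant, which is precisely the statement \eqref{eqinv} that $L^\cB(\nu * b) \le L^\cB(b)$. One need only observe in passing that $\nu * b$ is again self-adjoint when $b$ is, since $I^\cB \otimes \nu$ is $*$-preserving ($\nu$ being a state) and $\a$ is a $*$-homomorphism.

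The heart of the matter is the norm estimate, and this is where I expect the one genuinely clever step. Using Lemma \ref{lemcid}, coamenability gives $\ep * b = b$, so that
\[
b - P_\nu(b) = \ep * b - \nu * b = \big(I^\cB \otimes (\ep - \nu)\big)\,\a(b).
\]
Since $b - P_\nu(b)$ is self-adjoint, its norm equals $\sup_{\psi \in S(\cB)} |\psi(b - P_\nu(b))|$, so it suffices to bound each $\psi(b - P_\nu(b))$. Here I would use the Fubini-type identity that swaps the two functionals: writing $\a(b) = \sum_i b_i \otimes a_i$,
\[
\psi\big((I^\cB \otimes (\ep - \nu))\a(b)\big) = \sum_i \psi(b_i)(\ep - \nu)(a_i) = (\ep - \nu)\big((\psi \otimes I^\cA)\a(b)\big) = (\ep - \nu)(b * \psi).
\]
This converts the problem into an estimate on $\cA$, where Proposition \ref{profin} (applied with $\mu = \ep$ to produce $\nu$) gives $|(\ep - \nu)(a)| \le \e L^\cA(a)$ for all $a \in \cA$. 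Applying this to $a = b * \psi$ and then using $L^\cA(b * \psi) \le \sup_{\phi \in S(\cB)} L^\cA(b * \phi) = L^\cB(b)$ from the defining formula \eqref{eqind}, I obtain $|\psi(b - P_\nu(b))| \le \e L^\cB(b)$ uniformly in $\psi$; taking the supremum completes the proof.

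The main obstacle, and the step I would check most carefully, is the legitimacy of this functional swap together with the application of the $\cA$-estimate to the possibly non-self-adjoint element $b * \psi$. The former is just bilinearity of the slice maps on the algebraic tensor product, extended by continuity; the latter causes no difficulty because the conclusion of Proposition \ref{profin} is asserted for \emph{all} $a \in \cA$, not merely the self-adjoint ones. Everything else is bookkeeping, so I would expect the argument to go through cleanly once these two points are verified.
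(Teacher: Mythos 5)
Your proof is correct and follows essentially the same route as the paper's: the range claim via $\a(\cB^\g)\subseteq\cB^\g\otimes\cA^\g$ together with the vanishing of $\nu$ on $\cA^\g$ for $\g\notin F$, the Lip-norm bound read off directly from $\a$-invariance \eqref{eqinv}, and the norm estimate via Lemma \ref{lemcid}, the slice-map identity $\phi(\mu * c)=\mu(c*\phi)$, Proposition \ref{profin}, and equation \eqref{eqind}. The only difference is that you spell out the contractivity-plus-density argument extending the range claim from $\cB_c$ to all of $\cB$, a detail the paper leaves implicit.
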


\begin{proof}
The fact that $L^\cB( P_\nu (b)) \leq L^\cB(b)$ follows 
directly from the definition  of $\a$-invariance \eqref{eqinv}
and the definition of $P_\nu (b) $.
Because $\a (\cB^\g)\subseteq \cB^\g\otimes \cA^\g $
for each $\g\in \hd $, we see from the properties of $\nu $
that the range of 
$ P_\nu $ is contained in $\cB^F $.
Finally, let $b\in\cB $ be given with $b^* =b $.
Notice that for any $\phi \in S(\cB)$, any $\mu \in S(\cA)$,
and any $c \in \cB$ we have
$\phi(\mu * c) = (\phi \otimes \mu)\a(c) = \mu(c * \phi)$.
Then 
\begin{align*}
\|b -P_\nu (b)\| &= \|\ep * (b -P_\nu (b))\|
= \sup_ {\phi \in S (\cB)} |\phi (\ep * (b - \nu * b))|  \\
&=  \sup_ {\phi \in S (\cB)} |\ep (b*\phi ) - \nu (b*\phi )| 
\leq  \sup_ {\phi \in S (\cB)} \e L^\cA(b * \phi)   \\
&=  \e L^\cB(b),
\end{align*}
where we have used Lemma \ref{lemcid} for the first equality,
the self-adjointness of $b$ for the second equality, Proposition
\ref{profin} and the choice of $\nu$ for the inequality, and
equation \eqref{eqind} for the final equality.
\end{proof}

We remark that the $ P_\nu $ above can be viewed as a 
generalization of the $P_n $ used in the proof of theorem
8.2 of \cite {R6}, and that the above Proposition
\ref{proav} can be viewed as a generalization of lemma
8.3 in the proof of that theorem. Also, $ P_\nu $ and
$P_n $ can be viewed as analogues of the classical
Fejer kernels of harmonic analysis.

%%%%%%%%%%%%%%%%%%%%%% 

\section{The main theorem: convergence of truncations}
\label{secconv}

In discussing the convergence of truncations we will use
the quantum 
Gromov-Hausdorff distance that was first introduced in
\cite{R6}. Its setting is order-unit spaces. But the
spaces of operators which we will use below are
operator systems, which are order-unit spaces
with important extra structure. Quite soon after
the appearance of \cite{R6} David Kerr introduced
a stronger version of quantum Gromov-Hausdorff
distance \cite{Krr} that was especially tailored to the setting
of operator systems. It uses spaces of unital 
completely positive
maps into matrix algebras (as generalizations
of the state space),
and Kerr has referred
to it as ``complete Gromov-Hausdorff distance''.
At about the same time Hanfeng Li developed
a fairly different strategy for defining quantum
Gromov-Hausdorff-type distances, and provided
a version tailored for C*-algebras \cite{Lih3},
and a version tailored for order-unit spaces \cite{Lih2}.
Subsequently Kerr and Li wrote a paper \cite{KrL} in 
which they showed that when Li's strategy is applied
to operator systems, it leads to exactly the same
quantum Gromov-Hausdorff-type distance as
Kerr's complete Gromov-Hausdorff distance. 
They then call this ``operator Gromov-Hausdorff
distance''. (For C*-algebras the best current quantum
Gromov-Hausdorff-type distance is \Lat's
dual propinquity \cite{Ltr4}, but since it explicitly
uses the Leibniz inequality, it can not be
applied to operator systems.)  

Since most of the spaces of operators used below are
operator systems, it would be appropriate to
use here Kerr and Li's operator Gromov-Hausdorff distance.
But I have chosen not to do this since it would
considerably complicate the notation, and so
somewhat obscure the ideas. But I  fully
expect that with small adjustments the
arguments given below would work well for
operator Gromov-Hausdorff distance, though
I have not checked thoroughly that this is the
case.

For the readers' convenience we begin by 
recalling definition 4.2 of \cite{R6}, which is 
the definition of quantum 
Gromov-Hausdorff distance, adapted here for operator systems
much as in \cite{KKd}.
Let $ (\cC,  L^\cC ) $ and $ (\cD,  L^\cD)  $ be metrized operator systems.
Let $\cM ( L^\cC,  L^\cD) $
be the set of all Lip-norms $L$ on $\cC \oplus \cD $ such 
that $\mathrm{Dom}(L) = \mathrm{Dom}(L^\cC) \oplus \mathrm{Dom}(L^\cD)$ 
and such that the quotient of $ L $ on $\cC $ coincides with  $L^\cC $
and similarly for $\cD $. For this condition the inclusion of
$ S (\cC) $ into $ S (\cC \oplus \cD) $ is an isometry
for the metric $d^ {L^\cC} $ on $ S (\cC) $ and the metric
$d^ L $ on  $ S (\cC \oplus \cD) $, and similarly for $ \cD $,
so we can view  $ S (\cC) $ and  $ S (\cD) $ as subspaces
of  $ S (\cC \oplus \cD) $ with the induced metric from $d^L$.
Then the quantum Gromov-Hausdorff distance,
$\mathrm{dist}_q (\cC,\cD) $, between $\cC $ and $\cD $
is defined by
\[
\mathrm{dist}_q (\cC,\cD) 
= \inf\{\mathrm{dist}_H^{d^L} (S(\cC),S (\cD)):
L\in \cM ( L^\cC,  L^\cD) \}   ,
\]
where $\mathrm{dist}_H $ denotes ordinary
Hausdorff distance. 

Thus for any particular example, the challenge is to construct 
elements $L$ of $\cM ( L^\cC,  L^\cD) $ that bring
$S(\cC) $ and $S (\cD) $ appropriately close together.
A convenient way to approach this (section 5 of \cite{R6}) 
is to look for $L $'s of the form
\[
L (c,d) =  L^\cC (c) \vee  L^\cD (d) \vee  N (c,d)
\]
for $c \in \cC $ and $d\in\cD $ (and $\vee $ means ``max''). 
Here $N $ should be a 
norm-continuous seminorm on $\cC \oplus \cD $ such that
$ N (1_\cC,1_\cD) =0 $ but $ N (1_\cC, 0_\cD)\neq 0 $, 
and such that for any $c \in \cC $ and $\e>0 $ there is a $d\in\cD $
such that 
\[
 L^\cD (d)\vee N (c,d) \leq L^\cC (c) +\e , 
\]
and similarly for $\cC $ and $\cD $ interchanged.
 In \cite{R6}
the term $N$ is called a ``bridge''.

We now assume that $ (\cA,\D) $ is a compact matrix quantum group, 
and we let $(\cH, u) $ be a 
fundamental unitary corepresentation
of $ (\cA,\D) $, as defined in Definition \ref{deffnd}. As discussed 
just before that definition, arrange that $(\cH, u) $ is 
self-conjugate and contains the 
trivial corepresentation. Let $S $ be the set of irreducible 
unitary corepresentations that appear in the decomposition of
$(\cH, u) $. For each $n \in\bN$ we define
$S^n $ as done just before Proposition \ref{profil}, so the
$S^n $'s form a ``filtration'' of $\hd $. 

We also assume that $\a $ is an ergodic action of 
$ (\cA,\D) $ on a unital C*-algebra $\cB$, and we let the
$\cB^{S^n}$'s be defined just as before Proposition \ref{profil}, 
so that they form a filtration of $\cB $. 

The following theorem,
which is the main theorem of this paper, can be viewed
as a generalization of theorem 8.2 of \cite{R6} (which is the 
case in which our compact matrix quantum group 
is an ordinary compact Lie group). 

\begin{theorem}
\label{main}
Let $ (\cA,\D) $ be a coamenable compact matrix quantum group, and
let $\a $ be an ergodic action of 
$ (\cA,\D) $ on a unital C*-algebra $\cB$.
Let notation be as above. Let $ L^\cA $ be a regular Lip-norm on
$\cA $ which is right invariant, and let 
$ L^\cB $ be the seminorm on $\cB $ 
defined by Equation \ref{eqind} (which is a regular $\a$-invariant
Lip-norm). For each $n\in\bN $ let 
$ L^n $ be the restriction of $ L^\cB $ to the operator system
$\cB^{S^n}$, so that $ ( \cB^{S^n} , L^n) $ is a metrized
operator system. Then
\[
\mathrm{dist}_q (\cB^{S^n},\cB) \ \to \ 0  
\quad \mathrm{as} \quad n \ \to \ \infty  . 
\]
\end{theorem}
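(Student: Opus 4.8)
The plan is to reduce the whole statement to a single ``Fejér-type'' averaging operator supplied by Proposition \ref{proav}, and then to wrap it in a bridge in the sense of Section \ref{secconv}. First I would fix $\e>0$ and apply Proposition \ref{proav}, using the coidentity $\ep$ as the state $\mu$ there, to obtain a state $\nu\in S(\cA)$, a finite set $F\subseteq\hd$, and the operator $P_\nu$ on $\cB$ enjoying three properties: (a) $P_\nu(\cB)\subseteq\cB^F$; (b) $L^\cB(P_\nu(b))\leq L^\cB(b)$; and (c) $\|b-P_\nu(b)\|\leq\e L^\cB(b)$ for self-adjoint $b$. Because $(\cH,u)$ is a fundamental corepresentation, Definition \ref{deffnd} gives $\bigcup_n S^n=\hd$; since $F$ is finite and the $S^n$ increase, there is an index $n(\e)$ with $F\subseteq S^{n(\e)}$. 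The decisive observation is that this one triple $(\nu,F,P_\nu)$ serves \emph{every} $n\geq n(\e)$ at once: for such $n$ one has $\cB^F\subseteq\cB^{S^n}$, so $P_\nu$ maps $\cB$ into the truncation $\cB^{S^n}$.

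Next I would fix $n\geq n(\e)$, write $\cC=\cB^{S^n}$ with Lip-norm $L^n$ and $\cD=\cB$ with $L^\cB$, and define on $\cC\oplus\cD$ the seminorm $N((c,d))=\e^{-1}\|c-P_\nu(d)\|$, setting $L=L^n\vee L^\cB\vee N$. Since $\|P_\nu\|\leq 1$ the seminorm $N$ is norm-continuous, and $P_\nu(1_\cB)=1_\cB$ gives $N(1_\cC,1_\cD)=0$ while $N(1_\cC,0_\cD)=\e^{-1}\neq 0$. I then verify the two bridge conditions of Section \ref{secconv}. Given $c\in\cC$, take $d=c$: by (c), $N(c,c)\leq L^\cB(c)=L^n(c)$, so $L^\cB(d)\vee N(c,d)\leq L^n(c)$. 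Given $d\in\cD$, take $c=P_\nu(d)\in\cB^F\subseteq\cC$: then $N(c,d)=0$, and by (b) $L^n(c)=L^\cB(P_\nu(d))\leq L^\cB(d)$, so $L^n(c)\vee N(c,d)\leq L^\cB(d)$. Hence $N$ is a bridge and $L\in\cM(L^n,L^\cB)$.

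To estimate the Hausdorff distance I would use self-adjoint test elements $(c,d)$ with $L((c,d))\leq 1$, which suffices by the remark after Definition \ref{deflip}. For such a pair, $\|c-P_\nu(d)\|=\e N(c,d)\leq\e$ and $\|P_\nu(d)-d\|\leq\e L^\cB(d)\leq\e$, so $\|c-d\|\leq 2\e$. Now, under the isometric embeddings $S(\cC),S(\cD)\hookrightarrow S(\cC\oplus\cD)$: for $\mu\in S(\cB)$ its restriction $\mu|_\cC\in S(\cC)$ obeys $|\mu(c)-\mu(d)|=|\mu(c-d)|\leq 2\e$, and for $\mu\in S(\cC)$ any state extension $\tilde\mu\in S(\cB)$ obeys $|\tilde\mu(c)-\tilde\mu(d)|\leq 2\e$. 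Thus $\mathrm{dist}_H^{d^L}(S(\cC),S(\cD))\leq 2\e$, giving $\mathrm{dist}_q(\cB^{S^n},\cB)\leq 2\e$ for \emph{every} $n\geq n(\e)$; letting $\e\to 0$ yields the theorem.

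The substantive input, namely the single operator $P_\nu$ with properties (a)--(c), is precisely Proposition \ref{proav} (resting in turn on the finiteness Proposition \ref{provs} and Li's averaging), so once that is in hand the remainder is bridge bookkeeping. The most delicate step I expect is confirming that $L$ genuinely satisfies condition (3) of Definition \ref{deflip}, i.e.\ that $d^L$ induces the weak-$*$ topology on $S(\cC\oplus\cD)$; this is where the structure of $\cM(\cdot,\cdot)$ and the bridge criterion of \cite{R6} must be invoked (via total boundedness of the associated ball) rather than re-proved. The second point needing care is the uniformity in $n$: one must argue, as above, that a single $\e$-construction simultaneously controls all sufficiently large $n$, so that one obtains genuine convergence and not merely convergence along a subsequence of truncations. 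The passage from self-adjoint elements to the full operator systems, by splitting into real and imaginary parts, introduces only harmless bounded constants and is routine.
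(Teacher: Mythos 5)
Your proposal is correct and follows essentially the same route as the paper: Proposition \ref{proav} (applied with the coidentity as $\mu$) produces $\nu$, $F$ and $P_\nu$, finiteness of $F$ gives $F\subseteq S^{N}$ so that the single operator $P_\nu$ maps $\cB$ into every truncation $\cB^{S^n}$ with $n\geq N$, and a bridge argument finishes the proof. The only difference is bookkeeping: the paper simply cites proposition 8.5 of \cite{R6} as a black box (whose proof uses the bridge $\e^{-1}\|c-d\|$ and yields $\mathrm{dist}_q(\cB^{S^n},\cB)<\e$), whereas you re-derive that proposition inline with the variant bridge $\e^{-1}\|c-P_\nu(d)\|$, which costs only a harmless factor of $2$ in the final estimate.
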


\begin{proof}
Let $\e>0 $ be given. According to Proposition \ref{proav}
we can find a state $\nu $ of $\cA $ and a finite subset 
$F $ of $\hd $ such that the range of $ P_\nu $ is 
contained in $\cB^F $ and
\[
L^\cB( P_\nu (b)) \leq L^\cB(b)\quad \quad and \quad \quad
\|b -P_\nu (b)\| \leq \e L^\cB (b)   
\]
for all $b \in \cB$ with $b^* = b$.
Since $ F $ is finite, we can find an $N\in \bN $ such that
$ F\subseteq S^N $. Then for all $n\geq N $ we have
$ F\subseteq S^n $ so that the range of $ P_\nu $ is 
contained in $\cB^{S^n} $. 
We can then immediately
apply proposition 8.5 of \cite{R6} to conclude that
$\mathrm{dist}_q (\cB^{S^n},\cB) < \e $ for all $n\geq N $.
The bridge for this situation is simply $\e^{-1}\|b-a\|$.
\end{proof}

For the convenience of the reader, we now state
proposition 8.5 of \cite{R6}, for the case of 
metrized operator systems. We let $\cA^{sa}$
denote the set of self-adjoint elements of $\cA$,
and similarly for $\cB$.

\begin{proposition} (proposition 8.5 of \cite{R6})
Let $ (\cA, L^\cA) $ be a metrized operator system, 
and let $\cB $ be an operator
subsystem of $\cA $. Let $ L^\cB $ denote the restriction of 
$ L^\cA $ to $\cB $, so that $ (\cB, L^\cB )$ is a metrized 
operator system.  Let $ P $ be a function (not 
necessarily even linear or continuous) from $\cA^{sa} $ to 
$\cB^{sa} $ for which there is an $\e >0 $
such that
\[
L^\cB (P (a)) \leq L^\cA(a) \quad \quad and \quad \quad
\|a -P(a)\| \leq \e L^\cA (a)   
\]
for all $a \in \cA^{sa}$.
Then $\mathrm{dist}_q (\cB,\cA) < \e$.
\end{proposition}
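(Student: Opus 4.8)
The plan is to produce a single Lip-norm $L$ on $\cB \oplus \cA$ that pulls the two systems close together, using the bridge recipe recalled just above the statement. Concretely, I would take the bridge to be $N(b,a) = \e^{-1}\|b - a\|$ for $b \in \cB$ and $a \in \cA$ (the norm being that of $\cA$, into which $\cB$ is embedded as an operator subsystem), and set $L(b,a) = L^\cB(b) \vee L^\cA(a) \vee N(b,a)$. Once $N$ is shown to be an admissible bridge, the framework of section 5 of \cite{R6} gives $L \in \cM(L^\cB, L^\cA)$, so that $S(\cB)$ and $S(\cA)$ embed $d^L$-isometrically into $S(\cB \oplus \cA)$, and it then remains to bound their Hausdorff distance.

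Checking that $N$ is a bridge is the step where the hypotheses on $P$ are consumed, and it is the heart of the argument. The seminorm $N$ is norm-continuous, and since $1_\cB = 1_\cA$ we have $N(1_\cB, 1_\cA) = 0$ while $N(1_\cB, 0_\cA) = \e^{-1} \neq 0$. For the balancing condition from $\cB$ to $\cA$, given $b \in \cB$ and $\delta > 0$ I would take $a = b$ viewed in $\cA$: then $L^\cA(a) = L^\cB(b)$ because $L^\cB$ is the restriction of $L^\cA$, and $N(b,b) = 0$, so the required bound holds with room to spare. The serious direction is the balancing from $\cA$ to $\cB$: given a self-adjoint $a \in \cA$, take $b = P(a)$; the first hypothesis gives $L^\cB(P(a)) \le L^\cA(a)$, while the second gives $N(P(a), a) = \e^{-1}\|a - P(a)\| \le L^\cA(a)$, so that $L^\cB(b) \vee N(b,a) \le L^\cA(a)$. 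A general $a$ is handled by applying $P$ to its real and imaginary parts. This is exactly why $P$ must satisfy both estimates at once, with the \emph{same} constant $\e$ that scales the bridge, and this is the step I expect to be the only real content; everything else is formal.

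For the Hausdorff estimate I would pass between states of the two systems by restriction and extension. If $\nu \in S(\cA)$, then $\mu = \nu|_\cB \in S(\cB)$, and for every $(b,a)$ with $L(b,a) \le 1$ one has $|\mu(b) - \nu(a)| = |\nu(b - a)| \le \|b - a\| = \e\, N(b,a) \le \e$, whence $d^L(\mu,\nu) \le \e$. Conversely, if $\mu \in S(\cB)$, the Hahn--Banach extension theorem for order-unit spaces furnishes a $\nu \in S(\cA)$ with $\nu|_\cB = \mu$, and the identical computation gives $d^L(\mu,\nu) \le \e$. Hence every state of either system lies within $\e$ of a state of the other, so $\mathrm{dist}_H^{d^L}(S(\cB), S(\cA)) \le \e$ and therefore $\mathrm{dist}_q(\cB,\cA) \le \e$. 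I should note that this bridge in fact realizes Hausdorff distance equal to $\e$ on the nose (take $b_0 \in \cB$ with $L^\cB(b_0)=1$ and compare $a = b_0 \pm \e 1_\cA$), so the strict inequality stated is not squeezed out of this particular construction; it is the recorded form of Proposition 8.5 of \cite{R6}, and for the application in Theorem \ref{main} the bound $\le \e$ with $\e$ arbitrary already delivers the convergence $\mathrm{dist}_q(\cB^{S^n},\cB)\to 0$.
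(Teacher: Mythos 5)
Your proposal is correct and is essentially the argument the paper relies on: the paper states this proposition without proof, citing proposition 8.5 of \cite{R6}, and the bridge $N(b,a)=\e^{-1}\|b-a\|$ that you construct and verify via the two hypotheses on $P$ (balancing in one direction by $a=b$, in the other by $b=P(a)$, then comparing states by restriction and Hahn--Banach extension) is precisely the bridge the paper names in its proof of Theorem \ref{main}. Your closing observation is also apt: this construction yields $\mathrm{dist}_q(\cB,\cA)\le\e$ rather than a strict inequality, which is exactly the non-strict form of the bound in \cite{R6} and is all that the application to Theorem \ref{main} requires, since there $\e>0$ is arbitrary.
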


%%%%%%%%%%%%%%%%%%%%%% 

\section{Examples}
\label{secex}

We will now give a number of examples to which our results above apply.

\begin{example}
Let $G $ be a compact Lie group, and let $\cA = C(G)$. 
Choose an $Ad$-invariant inner product on the 
Lie algebra of $G $, and let $D $ be the corresponding Dirac operator 
on the Hilbert
space $\cS$ of spinor fields, as described in many places, for 
example in \cite{R35, R22}.
Then
$ (\cA,\cS,D) $ is a spectral triple, and so one 
can define a seminorm, $L ^D$, (with value $+\infty$ allowed) on $ \cA $ by
\[
 L^D (a) =\| [D,a]\|
\]
for any $a\in\cA $. Then $L^D $ is a C*-metric, and in particular
$ L^D$ is a Lip-norm that satisfies the 
Leibniz inequality \ref{eqleib}. See proposition 6.5 of \cite{R35},
as well as its anticedent theorem 4.2 of \cite{R4}.
Our results in the preceding sections apply to this class of examples,
including to the many ergodic actions of compact Lie groups 
on unital C*-algebras \cite{R35}.

The case in which $ G $ is the 
circle group is the example treated in section 3.2 of \cite{vSj2} concerning
Fej\'er-Riesz operator systems.

If we only have a continuous length function on $G$, it too can be used
to define a Lip-norm satisfying the Leibniz inequality on any unital
C*-algebra on which $G$ has an ergodic action. When our results
of previous sections are applied, one obtains theorem 8.2
of \cite{R6}.
\end{example}

\begin{example}
\label{exgp}
Let $\G $ be a finitely generated group, as in Example \ref{dg}. 
Both its full and its reduced C*-algebras are compact quantum 
groups. We can view them as (co)acting on themselves on the 
left using $\D$. 
They both acquire a filtration consisting of operator systems from any 
given finite set $S$ of generators of $\G$ closed under
taking inverses and containing the identity element of $G$. 
Let $\cA = C^*_r(\G) $, the reduced C*-algebra,
with its faithful representation on $\cH =\ell^2 (\G) $, and 
let $\{\cA_n\} $ be the 
corresponding filtration by operator systems using $S$. 
For each $n $ let $\cH_n $ be the image 
of $\cA_n $ in $\cH $, so that the $\cH_n $'s form an increasing
family of finite dimensional subspaces whose union is dense, with $\cH_0$ the
span of $1_\cA$. Set $\cK_0 = \cH_0$, and
for each integer $n\geq 1$ set $\cK_n = \cH_n \ominus \cH_{n-1}$.
Let $ D$ be the unbounded operator on 
$\cH $ whose domain is the algebraic sum $\oplus \cK_n $, 
and which multiplies all elements of $\cK_n $ by $n $, for
all $n \in \bN$. Then
$ (\cA,\cH,D) $ is a spectral triple. This is the main class of
examples discussed in Connes' first paper \cite{Cn7} on the
metric aspects of non-commutative geometry. One can 
again define a seminorm, $L $, on $ \cA $ by
\[
 L (a) =\| [D,a]\|
\]
for any $a\in\cA $. It is easily seen that $ L$ satisfies properties 1,
2 and 4 of the definition \ref{deflip} of a Lip-norm, as well as the
Leibniz inequality \ref{eqleib}. But property
3 of definition \ref{deflip} is only known to hold 
in the case of finitely generated
groups of polynomial growth \cite{R18, ChRi} (so virtually 
nilpotent), and the case of hyperbolic groups
and some related free product groups \cite{OzR}. No counterexamples 
are known for other groups. It is a very interesting open question 
to determine for which other groups property 3 is satisfied. (The
corresponding spectral triples for group algebras twisted
by a 2-cocycle are studied in \cite{LnW21}, but that is 
somewhat outside the the scope of the present paper.)

Of course one can always use Li's results discussed above
to choose a regular Lip-norm $ L$ on $\cA $ that is invariant for the 
left (co)action of $\cA $ on itself (but which may well not satisfy the
Leibniz inequality). But it is only if $\cA $ is coamenable, that 
is, if $\G $ is amenable, that we can apply the results in the earlier sections
to conclude that the operator systems $\cA_n $, equipped with the 
restrictions of $L $ to them, converge to $ (\cA, L) $ for quantum 
Gromov-Hausdorff distance.
\end{example}

\begin{example}
\label{exrapid}
Let $G$ be a group of rapid decay. (See \cite{AnC} for the definition.) 
In  \cite{AnC} it is shown how to use a proper length function
on $G$ and suitable Sobolev-type norms to define in a natural
way Lip-norms on $C^*(G)$. Then when $G$ is amenable 
it is easily seen that our
results in earlier sections apply. But these Lip-norms 
will seldom satisfy the
Leibniz inequality. (The
corresponding Lip-norms for the group algebra of $G$ twisted
by a 2-cocycle are studied in \cite{LnW17}, but that is again 
somewhat outside the the scope of the present paper.)
\end{example}

\begin{example}
\label{exneg}
In \cite{JnM} M. Junge and T. Mei use the theory of one-parameter
semigroups of completely positive operators to show how to
use a conditionally negative functions on a group $G$ of rapid
decay to produce a Lip-norm on
the reduced C*-algebra of $G$ that is Leibniz, and even
strongly Leibniz in the sense defined in definition 1.1 
of \cite{R21} and studied in \cite{AGKL}.
They show that this applies, for example, to cocompact
lattices in certain semisimple Lie groups. There is no
suggestion that there is a Dirac-type operator associated to this
situation. But in \cite{JMP} several Dirac-type operators are examined
that are associated to this situation (starting two paragraphs before proposition
C.4). In chapter 5 of the book \cite{ArK}, especially in section 5.8,
there is further examination of such Dirac-type operators. 
(I thank C\'edric Arhancet, co-author of this book, for bringing this
book and its chapter 5, and thus also reference \cite{JMP}, to my attention.)
\end{example}

\begin{example}
\label{exmatr}
Let $(\cA, \D) $ be any coamenable compact matrix quantum group.
So it is finitely generated, and any finite set of generators 
will yield a filtration of it. 
View it as (co)acting on itself on the 
left using $\D$. 
Because it is coamenable, 
our results above apply to it. Thus we can use Li's results 
discussed above
to choose a regular Lip-norm $ L$ on $\cA $
that is invariant for the left (co)action of $\cA $ on itself. 
Then when the operator systems of the filtration are
 equipped with the restriction of $L $ to them, they converge to 
 $ (\cA, L)$ for quantum 
Gromov-Hausdorff distance.
\end{example}

\begin{example}
\label{exqrap}
In \cite{BhVZ} the authors give a definition of what it means for a
discrete quantum group to have rapid decay (by modifying a definition 
given earlier by Vergnioux for the unimodular case). They then generalize 
the results from \cite{AnC} described in Example
\ref{exrapid} by showing that for any compact quantum matrix
group $(\cA, \D)$ whose dual discrete quantum group has rapid decay
one can again use suitable Sobolev-type norms to define in a natural
way Lip-norms on $(\cA, \D)$. When  $(\cA, \D)$ is coamenable 
it is easily seen that our
results in earlier sections apply. But again,
these Lip-norms will seldom satisfy the
Leibniz inequality.
\end{example}

\begin{example}
\label{exsu}
Let $\cA $ be the compact quantum group $SU_q (2) $. 
For its definition and properties see \cite{KKd} and the many 
references contained therein. 
It is a compact matrix quantum group, and is coamenable.
 
Even better, in \cite{KKd} the authors construct for each $q$ (for
$0<q\leq 1$) a 1-parameter 
family of Dirac-type
operators $D_{t,q}$ on $SU_q (2) $, each of which they prove
determines a regular
Lip-norm on $SU_q (2) $. These Lip-norms satisfy a twisted
Leibniz inequality (lemma 4.8 of \cite{KKd}). And from our results 
described above, for each of these
Lip-norms the operator systems of the filtration coming from 
any faithful finite-dimensional unitary corepresentation of  $ SU_q (2) $ 
(modified as discussed above so as to determine an operator 
subsystem of $SU_q (2) $) 
will converge to 
 $ SU_q (2) $ for quantum 
Gromov-Hausdorff distance.
\end{example}

\begin{example}
\label{expod}
Let $\cB $ be the standard Podle\'s sphere, $C(S^2_q)$. 
For its definition and properties see 
\cite{AgK, AgKK, KKd} and the many 
references contained therein. We can view $C(S^2_q)$ 
as a subalgebra of $SU_q (2) $, and as such, as an embedded 
homogeneous space in $SU_q (2) $. Here, for a compact 
quantum group $(\cA, \D) $ we say \cite{DCm}
that a unital C*-subalgebra
$\cB $ of $\cA $ is an embedded homogeneous space
of $(\cA, \D) $ if $\D (\cB) \subseteq \cB \otimes \cA $ 
so that the restriction of $\D $ to $\cB $ is an action of 
$\cA $ on $\cB $.

There have been many proposals for Dirac operators on $C(S^2_q)$.
Let us denote one of these proposals, that of 
Dabrowski and Sitarz \cite{DbS}, by $D_q $. 
It is shown in \cite{AgK} that the corresponding seminorm, 
$L^{D_q} $,  
is in fact a Lip-norm, and that $L^{D_q} $ is $\a$-invariant,
where here the action $\a$ is just the restriction of $\D$ to the
subalgebra $C(S^2_q)$.  
For the Dirac operators $D_{t,q}$ on 
$SU_q (2) $ of the previous example, let $L^{D_{t,q}} $
be the corresponding Lip-norms. In proposition 5.2 of
\cite{KKd} it is shown that for each $t $ the restriction of
$L^{D_{t,q}} $ to $C(S^2_q)$ is $L^{D_q} $. To put this in the 
context of Li's framework, we use the following simple result.

\begin{proposition}
Let $(\cA, \D) $ be a coamenable compact quantum group,
and let $\cB $
be an embedded homogeneous space in $\cA $. Let $ L^\cA$
be a regular Lip-norm on $\cA $. Let $ L^\cB $ be Li's 
corresponding induced Lip-norm on $\cB $ as defined in
equation \eqref{eqind}. Then $ L^\cB $ coincides 
with the restrictions of $ L^\cA$ to $\cB $.
\end{proposition}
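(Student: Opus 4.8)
The plan is to evaluate both seminorms on a self-adjoint $b\in\cB$; this suffices, since $L^\cB$ and $L^\cA$ each satisfy $L(c^*)=L(c)$ and the real and imaginary parts of a general element of $\cB$ again lie in $\cB$. First I would record that the action $\a=\D|_\cB$ is automatically ergodic: if $b\in\cB$ satisfies $\D(b)=b\otimes 1_\cA$, then applying $\ep\otimes I^\cA$ and the counit axiom $(\ep\otimes I^\cA)\D=I^\cA$ gives $b=\ep(b)1_\cA\in\bC 1_\cB$. Hence Li's formula \eqref{eqind} really does define a regular Lip-norm $L^\cB$ on $\cB$, so the statement is meaningful.

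The key structural observation is that for $b\in\cB$ the family $\{b*\phi:\phi\in S(\cB)\}$ coincides with $\{(\psi\otimes I^\cA)\D(b):\psi\in S(\cA)\}$. Indeed $b*\phi=(\phi\otimes I^\cA)\a(b)=(\phi\otimes I^\cA)\D(b)$, and since $\D(\cB)\subseteq\cB\otimes\cA$ the first tensor leg of $\D(b)$ lies in $\cB$; thus replacing $\phi$ by any state $\psi\in S(\cA)$ extending it (states on the unital C*-subalgebra $\cB$ always extend) leaves $b*\phi$ unchanged, and every $\phi\in S(\cB)$ arises as such a restriction. Consequently
\[
L^\cB(b)=\sup_{\psi\in S(\cA)}L^\cA\big((\psi\otimes I^\cA)\D(b)\big),
\]
which is precisely Li's left-invariantization of $L^\cA$ (the construction recalled just before Proposition \ref{inver}) evaluated at $b$.

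From here the two inequalities separate cleanly. For the lower bound I would use coamenability: the coidentity $\ep$ is then a genuine state of $\cA$, its restriction is a state of $\cB$, and by the counit axiom $b*\ep=(\ep\otimes I^\cA)\D(b)=b$; taking $\phi=\ep$ in the supremum gives $L^\cB(b)\geq L^\cA(b)$, and this direction needs nothing beyond coamenability. The upper bound is the real content: I would invoke left-invariance of $L^\cA$, namely $L^\cA\big((\psi\otimes I^\cA)\D(b)\big)\leq L^\cA(b)$ for every $\psi\in S(\cA)$ (noting that $(\psi\otimes I^\cA)\D(b)$ is again self-adjoint when $b$ is, as $\D$ is a $*$-homomorphism and $\psi$ a state), whence $L^\cB(b)\leq L^\cA(b)$.

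The hard part is therefore the upper bound, which does not follow from regularity alone: the displayed supremum is genuinely the left-invariantization, and it returns $L^\cA(b)$ exactly when $L^\cA$ does not increase under left translation. The feature that makes the proposition go through is thus left-invariance of $L^\cA$ (equivalently one may take $L^\cA$ bi-invariant, which is always available by Proposition \ref{inver}); this is precisely the property enjoyed by the equivariant Dirac Lip-norms $L^{D_{t,q}}$ on $SU_q(2)$ in the motivating example, so that their restriction to $C(S^2_q)$ is recovered as the induced Lip-norm. Once left-invariance is in hand the two inequalities combine to give $L^\cB(b)=L^\cA(b)$ for self-adjoint $b$, and the $*$-invariance of both seminorms extends this to all of $\cB$.
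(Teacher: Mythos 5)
Your proof follows the paper's own argument step for step through its first half: the identification $b*\phi = b*\mu$ whenever $\mu\in S(\cA)$ extends $\phi\in S(\cB)$ (using $\D(\cB)\subseteq\cB\otimes\cA$ and Hahn--Banach extension of states on the unital subsystem), and then the choice $\mu=\ep$, legitimate by coamenability, together with the counit identity $(\ep\otimes I^\cA)\D=I^\cA$, to obtain $L^\cB(b)\geq L^\cA(b)$. Where you diverge is the reverse inequality: the paper asserts that $L^\cB(b)\leq L^\cA(b)$ ``follows easily'' from the same identification, with $L^\cA$ assumed only regular, whereas you invoke left-invariance of $L^\cA$, a hypothesis not present in the statement.

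On this point your instinct is correct, and what you have flagged is a genuine gap in the paper's proof as written. The identification yields $L^\cB(b)=\sup_{\mu\in S(\cA)}L^\cA(b*\mu)$, which is exactly Li's left-invariantization of $L^\cA$ evaluated at $b$; it dominates $L^\cA(b)$ but need not equal it. Note that $\cB=\cA$ itself satisfies the paper's definition of an embedded homogeneous space, so the proposition as stated would force every regular Lip-norm on a coamenable compact quantum group to coincide with its left-invariantization. This fails already for $\cA=C(\bZ/3)$ with $L(a)=\max\bigl(|a(0)-a(1)|,\ 2|a(1)-a(2)|,\ |a(0)-a(2)|\bigr)$: taking $b$ with values $(1,0,0)$ and $\mu=\mathrm{ev}_1$, evaluation at $1\in\bZ/3$, gives $b*\mu$ with values $(0,0,1)$, so $L(b*\mu)=2>1=L(b)$, whence $L^\cB(b)>L^\cA(b)$. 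Thus an invariance hypothesis is genuinely needed, and your amended statement --- $L^\cA$ left-invariant, available in bi-invariant form by Proposition \ref{inver} and satisfied by the equivariant Dirac Lip-norms $L^{D_{t,q}}$ motivating the example --- is the correct one, and your proof of it is complete for self-adjoint elements. Two small cautions: your final sentence claims that $*$-invariance upgrades agreement on self-adjoint elements to agreement on all of $\cB$, but that inference is not valid for general $*$-invariant seminorms (it is harmless here, since Definition \ref{labinv} formulates invariance only for self-adjoint elements and the quantum metric data $d^L$ and the Gromov--Hausdorff apparatus see only self-adjoint elements); and your ergodicity check can be done without coamenability by applying $I^\cB\otimes\hr$ to the equation $\D(b)=b\otimes 1_\cA$ and using invariance of the Haar state.
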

\begin{proof}
Let $\mu \in S (\cA)$, and let $\phi $ be its restriction to $\cB $,
so $\phi \in S (\cB) $.
Since for any $b \in \cB $ we have $\D (b)\in \cB\otimes \cA $,
we see that
\[
b*\phi = (\phi \otimes I^\cA)\D (b) =  (\mu \otimes I^\cA)\D (b)
=b *\mu    .
\]
But any $\phi \in S (\cB) $ can be extended (perhaps in many ways) 
to be an element $\mu $ of $ S (\cA) $. It follows easily that
$ L^\cB (b) \leq  L^\cA (b) $. But if we let $\mu =\ep $
we see that in fact $ L^\cB (b) =  L^\cA (b) $.
\end{proof}

Consequently, the filtration of $C(S^2_q)$  constructed as in
Proposition \ref{profil} from any faithful finite-dimensional unitary 
corepresentation of  $ SU_q (2) $ (modified as discussed above 
so as to determine an operator subsystem of $SU_q (2) $) 
will converge to 
$C(S^2_q)$ for quantum 
Gromov-Hausdorff distance.
\end{example}

\begin{example}
\label{exberez}
The situation involving ``matrix algebras converge to the sphere''
which is discussed in \cite{R7, R21, R29}
can be viewed as being a closely related situation in which 
extra structure is present that equips the finite-dimensional
operator systems with a C*-algebra product making them full 
matrix algebras. But when this is generalized to the Podle\'s spheres
\cite{Sai, AgKK} one gets only operator systems.
\end{example}

%%%%%%%%%%%%%%%%%%

\providecommand{\bysame}{\leavevmode\hbox to3em{\hrulefill}\thinspace}
\providecommand{\MR}{\relax\ifhmode\unskip\space\fi MR }
% \MRhref is called by the amsart/book/proc definition of \MR.
\providecommand{\MRhref}[2]{%
  \href{http://www.ams.org/mathscinet-getitem?mr=#1}{#2}
}
\providecommand{\href}[2]{#2}

}   % end Large

\end{document}